\patchcmd{\@maketitle}{\LARGE \@title}{\LARGE\bfseries\@title}{}{}
\renewcommand{\@seccntformat}[1]{\csname the#1\endcsname.\quad}
\definecolor{darkblue}{rgb}{0,0,.5}
\def\th@plain{%
	\thm@notefont{}% same as heading font
	\itshape % body font
}
\def\th@definition{%
	\thm@notefont{}% same as heading font
	\normalfont % body font
}
\renewenvironment{proof}[1][\proofname]{\par
	\normalfont
	\topsep0\p@\@plus3\p@ \trivlist
	\item[\hskip\labelsep\itshape
	#1\@addpunct{.}]\ignorespaces
}{%
	\qed\endtrivlist
}
\newtheorem{theorem}{Theorem}[section]
\newtheorem{lemma}[theorem]{Lemma}
\theoremstyle{definition}
\theoremstyle{definition}
\theoremstyle{definition}
\newtheorem{remark}[theorem]{Remark}
\theoremstyle{definition}
\newtheorem{algorithm}[theorem]{Algorithm}
\theoremstyle{definition}
\newcommand{\dom}{\ensuremath{\operatorname{dom}}}
\newcommand{\prox}{\ensuremath{\operatorname{prox}}}
\newcommand{\argmin}{\ensuremath{\operatorname*{argmin}}}
\newcommand{\epi}{\ensuremath{\operatorname{epi}}}
\newcommand{\dist}{\ensuremath{\operatorname{dist}}}
\newcounter{step}%[algorithm]
\newcommand\step[1]{%
	\refstepcounter{step}	
	\vskip 0.25\baselineskip
	\ifx\hfuzz#1\hfuzz
		\item[~\(\triangleright\)~\textbf{Step~\arabic{step}.}]
	\else
		\item[~\(\triangleright\)~\textbf{Step~\arabic{step}}] (\texttt{#1})\textbf{.}%
	\fi
}
\begin{document}

\title{A proximal subgradient algorithm with extrapolation for structured nonconvex nonsmooth problems}

\author{\small
Tan Nhat Pham\thanks{Centre for Smart Analytics and Centre for New Energy Transition Research, Federation University Australia, Ballarat, VIC 3353, Australia.
E-mail: \texttt{tanp@students.federation.edu.au}.},
~
Minh N. Dao\thanks{School of Science, RMIT University, Melbourne, VIC 3000, Australia.
E-mail: \texttt{minh.dao@rmit.edu.au}.},
~
Rakibuzzaman Shah\thanks{Centre for New Energy Transition Research, Federation University Australia, Ballarat, VIC 3353, Australia. 
E-mail: \texttt{m.shah@federation.edu.au}.},
~
Nargiz Sultanova\thanks{Centre for Smart Analytics, Federation University Australia, Ballarat, VIC 3353, Australia.
E-mail: \texttt{n.sultanova@federation.edu.au}.},
~
Guoyin Li\thanks{Department of Applied Mathematics, University of New South Wales, Sydney 2052, Australia.
E-mail: \texttt{g.li@unsw.edu.au}.},
~~and~
Syed Islam\thanks{Centre for New Energy Transition Research, Federation University Australia, Ballarat, VIC 3353, Australia. 
E-mail: \texttt{s.islam@federation.edu.au}.}
}

\date{\today}

\maketitle

\begin{abstract}
In this paper, we consider a class of structured nonconvex nonsmooth optimization problems, in which the objective function is formed by the sum of a possibly nonsmooth nonconvex function and a differentiable function with Lipschitz continuous gradient, subtracted by a weakly convex function. This general framework allows us to tackle problems involving nonconvex loss functions and problems with specific nonconvex constraints, and it has many applications such as signal recovery, compressed sensing, and optimal power flow distribution. We develop a proximal subgradient algorithm with extrapolation for solving these problems with guaranteed subsequential convergence to a stationary point. The convergence of the whole sequence generated by our algorithm is also established under the widely used Kurdyka--{\L}ojasiewicz property. To illustrate the promising numerical performance of the proposed algorithm, we conduct numerical experiments on two important nonconvex models. These include a compressed sensing problem with a nonconvex regularization and an optimal power flow problem with distributed energy resources.
\end{abstract}

\noindent{\bf Keywords:}
Composite optimization problem, 
difference of convex,
distributed energy resources,
extrapolation, 
optimal power flow,
proximal subgradient algorithm.

\smallskip
\noindent{\bf Mathematics Subject Classification (MSC 2020):}
90C26,	%Nonconvex programming, global optimization
49M27,	%Decomposition methods
65K05.	%Numerical mathematical programming methods

\section{Introduction}

In this work, we consider the structured optimization problem
\begin{equation}\label{eq:P}
\min_{x\in C} F(x) :=f(x)+h(Ax)-g(x), 
\tag{P}
\end{equation}
where $C$ is a nonempty closed subset of a finite-dimensional real Hilbert space $\mathcal{H}$, $A$ is a linear mapping from $\mathcal{H}$ to another finite-dimensional real Hilbert space $\mathcal{K}$, $f\colon \mathcal{H}\to (-\infty, +\infty]$ is a proper lower semicontinuous (possibly nonsmooth and nonconvex) function, $h\colon \mathcal{K}\to \mathbb{R}$ is a differentiable (possibly nonconvex) function whose gradient is Lipschitz continuous with modulus $\ell$, and $g\colon \mathcal{H}\to (-\infty, +\infty]$ is a continuous weakly convex function with modulus $\beta$ on an open convex set containing $C$. This broad optimization problem has many important applications in diverse areas, including power control problems \cite{Cheng2012}, compressed sensing \cite{Lou2017}, portfolio optimization, supply chain problem, image segmentation, and others \cite{LeThi2018}. 

In particular, the model problem \eqref{eq:P} covers two of the most general models in the literature. Firstly, in statistical learning, the following optimization model is often used
\begin{align}\label{eq:stats}
\min _{x\in\mathbb{R}^d} \left(\varphi(x)+\gamma \, r(x)\right),
\end{align}
where $\varphi$ is called a loss function which measures the data misfitting, $r$ is a regularization which promotes specific structure in the solution such as sparsity, and $\gamma>0$ is a weighting parameter. Typical choices of the loss function are the least square loss function $\varphi(x)=\frac{1}{2}\|Ax-b\|^2$ where $A\in \mathbb{R}^{m\times d}$ and $b \in \mathbb{R}^m$ and the logistic loss function, which are both convex. In the literature, nonconvex loss functions have also received increased attentions. Some popular nonconvex loss functions include the ramp loss function \cite{Ramp_loss2022,Ramp_loss2017} and the Lorentzian norm \cite{Lorentian}. In addition, \cite{Ahn_2017} recently showed that many regularization $r$ used in the literature can be written as difference of two convex functions, and so, the model \eqref{eq:stats} can be formulated into problem \eqref{eq:P}. These include popular regularizations such as the smoothly clipped absolute deviation (SCAD) \cite{Antoniadis_1997}, the indicator function of cardinality constraint \cite{Gotoh_2017}, $L_1-L_2$ regularization \cite{Lou2017}, or minimax concave penalty (MCP) \cite{Zhang_2010}. Therefore, problem \eqref{eq:P} can be interpreted as a problem with the form \eqref{eq:stats} whose objective function is the sum of a nonconvex and nonsmooth loss function and a regularization which can be expressed as a specific form of difference-of-(possibly) nonconvex functions\footnote{Indeed, note that any smooth function with Lipschitz gradient function  is weakly convex. By adding and subtracting $\alpha\|x\|^2$ for large $\alpha>0$, our model problem (P) can also be mathematically reduced to the form \eqref{eq:stats} whose objective function is the sum of a nonconvex and nonsmooth loss function and a  difference-of-convex regularization.}.
Secondly, in the case when $C =\mathbb{R}^d$ and $A$ is the identity mapping, problem \eqref{eq:P} reduces to
\begin{equation}\label{eq:P'}
\min_{x\in \mathbb{R}^d} \left(f(x)+h(x)-g(x)\right), 
\end{equation}
referred as the general difference-of-convex (DC) program, which is a broad class of optimization problems studied in the literature. To solve problem \eqref{eq:P'} under the convexity of $g$, a \emph{generalized proximal point algorithm} was developed in \cite{An2016}. For the case when both $f$ and $g$ are convex, \cite{Phan2018} provided an \emph{accelerated difference-of-convex algorithm} incorporating Nesterov's acceleration technique into the standard \emph{difference-of-convex algorithm} (DCA) to improve its performance, while \cite{Liu2022} proposed an \emph{inexact successive quadratic approximation method}. When $f$, $h$, and $g$ are all required to be convex, a \emph{proximal difference-of-convex algorithm with extrapolation} (pDCAe) was proposed in \cite{Wen2017}, and there are also other existing studies (e.g., \cite{Lu2019,Lu2018}) that developed algorithms to solve such a problem.

In the cases where the loss function $f$ is smooth and the regularization $r$ is prox-friendly in the sense that its proximal operator can be computed efficiently, the proximal gradient method is a widely used algorithm for solving \eqref{eq:stats} (for example, see \cite{Beck2017}). 
Moreover, incorporating information from previous iterations to accelerate the proximal algorithm while trying not to significantly increase the computational cost has also been a  research area which receives a lot of attention. One such approach is extrapolation technique. In this approach, \emph{momentum} terms that involve the information from previous iterations are used to update the current iteration. Such techniques have been successfully implemented and achieved significant results, including Polyak's heavy ball method \cite{Polyak1964}, Nesterov's techniques \cite{Nesterov2021,Nesterov2018}, and the fast iterative shrinking-threshold algorithm (FISTA) \cite{Beck2009}. In particular, extrapolation techniques have shown competitive results for optimization problems that involve sum of convex functions \cite{Attouch2018}, difference of convex functions \cite{Wen2017,Lu2018}, and ratio of nonconvex and nonsmooth functions \cite{Bo2021}.

 In view of these successes, this paper proposes an extrapolated proximal subgradient algorithm for solving problem \eqref{eq:P}. In our work, comparing to the literature, the convexity and smoothness of the loss functions $f$ are relaxed. We also allow a closed feasible set $C$ instead of optimizing over the whole space. This general framework allows us to tackle problems involving nonconvex loss functions such as Lorentzian norm and problems with specific nonconvex constraints such as spherical constraint. We then prove that the sequence generated by the algorithm is bounded and any of its cluster points is a stationary point of the problem. We also prove the convergence of the full sequence under the assumption of Kurdyka--{\L}ojasiewicz property. We then evaluate the performance of the proposed algorithm on a compressed sensing problem for both convex and nonconvex loss functions together with the recently proposed nonconvex $L_1-L_2$ regularization. Finally, we formulate an optimal power flow problem considering photovoltaic systems placement, and address it using our algorithm.

The rest of this paper is organized as follows. Section~\ref{sec:preliminaries} provides preliminary materials used in this work. In Section~\ref{sec:algorithm}, we introduce our algorithm with guaranteed subsequential convergence and full sequential convergence. Section~\ref{sec:casestudy} presents the numerical experiments, and conclusion is given in Section~\ref{sec:conclusion}.

\section{Premilinaries}
\label{sec:preliminaries}

Throughout this paper, $\mathcal{H}$ is a finite-dimensional real Hilbert space with inner product $\langle \cdot, \cdot \rangle$ and the induced norm $\|\cdot\|$. We use the notation $\mathbb{N}$ for the set of nonnegative integers, $\mathbb{R}$ for the set of real numbers, $\mathbb{R}_+$ for the set of nonnegative real numbers, and $\mathbb{R}_{++}$ for the set of the positive real numbers.

Let $f\colon \mathcal{H}\to \left[-\infty,+\infty\right]$. The \emph{domain} of $f$ is $\dom f :=\{x\in \mathcal{H}: f(x) <+\infty\}$ and the \emph{epigraph} of $f$ is $\epi f := \{(x,\rho)\in \mathcal{H}\times \mathbb{R}: f(x)\leq \rho\}$. The function $f$ is \emph{proper} if $\dom f \neq \varnothing$ and it never takes the value $-\infty$, \emph{lower semicontinuous} if its epigraph is a closed set, and \emph{convex} if its epigraph is a convex set. We say that $f$ is \emph{weakly convex} if $f+\frac{\alpha}{2}\|\cdot\|^2$ is convex for some $\alpha\in \mathbb{R}_+$. The \emph{modulus} of the weak convexity is the smallest constant $\alpha$ such that $f+\frac{\alpha}{2}\|\cdot\|^2$ is convex. Given a subset $C$ of $\mathcal{H}$, the \emph{indicator function} $\iota_C$ of $C$ is defined by $\iota_C(x) :=0$ if $x\in C$, and $\iota_C(x) :=+\infty$ if $x\notin C$. If $f+\iota_C$ is weakly convex with modulus $\alpha$, then $f$ is said to be \emph{weakly convex on $C$ with modulus $\alpha$}. Some examples of weakly convex functions are quadratic functions, convex functions, and differentiable functions with Lipschitz continuous gradient.

Let $x\in \mathcal{H}$ with $\lvert f(x) \rvert <+\infty$. The \emph{Fr\'echet subdifferential} of $f$ at $x$ is defined by

\begin{equation*}
\widehat{\partial} f(x) :=\left\{x^*\in \mathcal{H}:\; \liminf_{y\to x}\frac{f(y)-f(x)-\langle x^*,y-x\rangle}{\|y-x\|}\geq 0\right\}
\end{equation*}
and the \emph{limiting subdifferential} of $f$ at $x$ is defined by

\begin{equation*}
\partial_L f(x) :=\left\{x^*\in \mathcal{H}:\; \exists x_n\stackrel{f}{\to}x,\; x_n^*\to x^* \text{~~with~~} x_n^*\in \widehat{\partial} f(x_n)\right\},
\end{equation*}
where the notation $y\stackrel{f}{\to}x$ means $y\to x$ with $f(y)\to f(x)$. In the case where $\lvert f(x) \rvert =+\infty$, both Fr\'echet subdifferential and limiting subdifferential of $f$ at $x$ are defined to be the empty set. The \emph{domain} of $\partial_L f$ is given by $\dom \partial_L f :=\{x\in \mathcal{H}: \partial_L f(x) \neq \varnothing\}$. It can be directly verified from the definition that the limiting subdifferential has the \emph{robustness property}

\begin{equation*}
\partial_L f(x) =\left\{x^*\in \mathcal{H}:\; \exists x_n \stackrel{f}{\to}x,\; x_n^*\to x^* \text{~~with~~} x_n^*\in \partial_L f(x_n)\right\}.
\end{equation*}

Next, we revisit some important properties of the limiting subdifferential.

\begin{lemma}[Sum rule]
\label{lemma:sum}
Let $x \in \mathcal{H}$ and let $f, g\colon \mathcal{H} \to (-\infty, +\infty]$ be proper lower semicontinuous functions. Suppose that $f$ is finite at $x$ and $g$ is locally Lipschitz around ${x}$. Then $\partial_L (f + g)({x}) \subseteq \partial_L f({x})+\partial_L g({x})$. Moreover, if $g$ is strictly differentiable at ${x}$, then $\partial_L(f + g)({x}) = \partial_Lf({x}) + \nabla g({x})$.
\end{lemma}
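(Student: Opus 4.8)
The plan is to establish the sum rule for the limiting subdifferential by reducing it to the corresponding statement for the Fréchet subdifferential, then passing to the limit. This is the standard route for such results (as in Mordukhovich's or Rockafellar--Wets' variational analysis), and the local Lipschitz hypothesis on $g$ is precisely what makes the Fréchet-level sum rule behave well.

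\medskip

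\noindent\textbf{Fréchet-level inclusion.}
First I would prove the pointwise Fréchet inclusion $\widehat{\partial}(f+g)(x) \subseteq \widehat{\partial} f(x) + \widehat{\partial} g(x)$ whenever $g$ is locally Lipschitz around $x$. The key observation is that if $g$ is Lipschitz near $x$, then its Fréchet subdifferential $\widehat\partial g(x)$ relates cleanly to directional behaviour, and one can use the defining $\liminf$ inequality: take $x^* \in \widehat\partial(f+g)(x)$ and $u^* \in \widehat\partial g(x)$ (nonempty Fréchet subdifferentials of Lipschitz functions exist on a dense set, but here I would instead argue directly), and show $x^* - u^* \in \widehat\partial f(x)$ by adding and subtracting the linear approximations inside the difference quotient. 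Concretely, since
\begin{equation*}
\liminf_{y\to x}\frac{(f+g)(y)-(f+g)(x)-\langle x^*,y-x\rangle}{\|y-x\|}\geq 0,
\end{equation*}
and $g$ being Lipschitz keeps all the quotients bounded, one can split off the $g$-part and transfer the estimate to $f$ alone. The clean version here is the \emph{fuzzy} or \emph{sum} rule at the Fréchet level; the strictly differentiable case is easier because then $\widehat\partial g(x)=\{\nabla g(x)\}$ is a singleton and the splitting is exact, giving the claimed equality $\widehat\partial(f+g)(x)=\widehat\partial f(x)+\nabla g(x)$.

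\medskip

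\noindent\textbf{Passing to the limit.}
Next I would lift this to the limiting subdifferential by a diagonal limiting argument using the definition $\partial_L$. Take $x^* \in \partial_L(f+g)(x)$, so there exist $x_n \xrightarrow{f+g} x$ and $x_n^* \to x^*$ with $x_n^* \in \widehat\partial(f+g)(x_n)$. Since $g$ is locally Lipschitz near $x$, it is continuous there, so $x_n \to x$ together with $(f+g)(x_n)\to(f+g)(x)$ forces $f(x_n)\to f(x)$, i.e.\ $x_n \xrightarrow{f} x$. Applying the Fréchet-level inclusion at each $x_n$ gives $x_n^* = a_n^* + b_n^*$ with $a_n^*\in\widehat\partial f(x_n)$ and $b_n^*\in\widehat\partial g(x_n)$. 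The boundedness of $\{b_n^*\}$ follows from the local Lipschitz constant of $g$ (Fréchet subgradients of an $L$-Lipschitz function have norm at most $L$), so after passing to a subsequence $b_n^* \to b^* \in \partial_L g(x)$, whence $a_n^* = x_n^* - b_n^* \to x^* - b^* =: a^*$, and by definition $a^* \in \partial_L f(x)$. This yields $x^* = a^* + b^* \in \partial_L f(x) + \partial_L g(x)$.

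\medskip

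\noindent\textbf{Main obstacle and the differentiable case.}
The main obstacle is the compactness step: I need the sequence $\{b_n^*\}$ of Fréchet subgradients of $g$ to be bounded so that a convergent subsequence exists and its limit lands in $\partial_L g(x)$. This is exactly where local Lipschitz continuity is essential rather than merely convenient—without it the $b_n^*$ could escape to infinity and the decomposition would not survive the limit. In the strictly differentiable special case this difficulty evaporates entirely: the Fréchet-level identity is already an equality with $\widehat\partial g(x_n)$ replaced by the single vector $\nabla g(x_n)$, continuity of $\nabla g$ (which follows from strict differentiability together with the ambient Lipschitz behaviour) gives $\nabla g(x_n)\to\nabla g(x)$ automatically, and one obtains the reverse inclusion as well by running the same argument with the roles of $f$ and $f+g$ interchanged (writing $f=(f+g)+(-g)$ and noting $-g$ is also strictly differentiable). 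This symmetric argument upgrades the inclusion to the stated equality $\partial_L(f+g)(x)=\partial_L f(x)+\nabla g(x)$.
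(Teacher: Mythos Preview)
The paper itself does not give a self-contained argument: its proof consists entirely of a citation to Mordukhovich's book (Proposition~1.107(ii) and Theorem~3.36). So there is no detailed approach to compare against, only the question of whether your sketch is internally sound.

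Your route has the right architecture (Fr\'echet level, then limit), but there is a genuine gap in the ``Passing to the limit'' step. You write that ``applying the Fr\'echet-level inclusion at each $x_n$ gives $x_n^* = a_n^* + b_n^*$ with $a_n^*\in\widehat\partial f(x_n)$ and $b_n^*\in\widehat\partial g(x_n)$.'' This pointwise decomposition is not available: the inclusion $\widehat\partial(f+g)(y)\subseteq\widehat\partial f(y)+\widehat\partial g(y)$ can fail for locally Lipschitz $g$ because $\widehat\partial g(y)$ may be empty. A concrete counterexample on $\mathbb{R}$ is $f(t)=|t|$, $g(t)=-|t|$: then $f+g\equiv 0$, so $\widehat\partial(f+g)(0)=\{0\}$, while $\widehat\partial g(0)=\varnothing$ and the right-hand side is empty. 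You mention the \emph{fuzzy} sum rule in passing but then do not actually use it; the fuzzy rule is not a convenience here but the essential ingredient. It only guarantees, for each $\varepsilon>0$, a decomposition $x_n^*\in\widehat\partial f(y_n)+\widehat\partial g(z_n)+\varepsilon\mathbb{B}$ with $y_n,z_n$ \emph{near} (not equal to) $x_n$, and the diagonal limiting argument must be carried out with these auxiliary points and an $\varepsilon\downarrow 0$ scheme. That is precisely the content of Mordukhovich's Theorem~3.36.

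A smaller issue in the strictly differentiable paragraph: strict differentiability of $g$ at the single point $x$ does not give differentiability of $g$ at the nearby $x_n$, so ``$\nabla g(x_n)\to\nabla g(x)$'' is not a meaningful statement. What strict differentiability at $x$ \emph{does} give is that $\nabla g(x)$ (a fixed vector) serves as an $\varepsilon$-Fr\'echet subgradient of $g$, and $-\nabla g(x)$ of $-g$, uniformly over a neighbourhood of $x$; this is enough to run the equality argument without ever writing $\nabla g(x_n)$. Your symmetric trick of writing $f=(f+g)+(-g)$ for the reverse inclusion is correct and is exactly how the equality is obtained in Mordukhovich's Proposition~1.107(ii).
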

\begin{proof}
This follows from \cite[Proposition 1.107(ii) and Theorem 3.36]{Mordukhovich2006}.
\end{proof}

The following result, whose proof is included for completeness, is similar to \cite[Lemma~2.9]{Dao2019}.
\begin{lemma}[Upper semicontinuity of subdifferential]
\label{l:upsemicont}
Let $f\colon \mathcal{H} \to [-\infty, +\infty]$ be Lipschitz continuous around $x \in \mathcal{H}$, let $(x_n)_{n\in \mathbb{N}}$ be a sequence in $\mathcal{H}$ converging to $x$, and let, for each ${n\in \mathbb{N}}$, $x_n^* \in \partial_L f(x_n)$. Then $(x_n^*)_{n\in \mathbb{N}}$ is bounded with all cluster points contained in $\partial_L f(x)$.
\end{lemma}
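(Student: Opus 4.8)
The plan is to split the statement into its two assertions: first the uniform boundedness of $(x_n^*)_{n\in\mathbb{N}}$, and then the membership of every cluster point in $\partial_L f(x)$. Both rest on a single quantitative observation, namely that the Lipschitz modulus of $f$ controls the size of its limiting subgradients. Since $f$ is Lipschitz continuous around $x$, I fix a radius $\delta>0$ and a constant $L\ge 0$ such that $|f(y)-f(z)|\le L\|y-z\|$ for all $y,z$ in the open ball $B(x,\delta)$.

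For the boundedness, the key sub-step is to show that $\|y^*\|\le L$ whenever $y^*\in\partial_L f(y)$ and $y$ lies in $B(x,\delta)$. I would first establish this at the Fr\'echet level: if $y^*\in\widehat{\partial}f(y)$ with $y^*\neq 0$, then testing the defining $\liminf$ inequality along the direction $v=y^*/\|y^*\|$ and using $|f(y+tv)-f(y)|\le Lt$ for small $t>0$ forces $\langle y^*,v\rangle=\|y^*\|\le L$. The bound then transfers to $\partial_L f(y)$ directly from the definition of the limiting subdifferential: any $y^*\in\partial_L f(y)$ is a limit of Fr\'echet subgradients $y_j^*\in\widehat{\partial}f(y_j)$ at points $y_j\to y$, which eventually lie in $B(x,\delta)$ and hence satisfy $\|y_j^*\|\le L$, so $\|y^*\|\le L$. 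Since $x_n\to x$, we have $x_n\in B(x,\delta)$ for all large $n$, whence $\|x_n^*\|\le L$; the finitely many remaining terms do not affect boundedness.

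For the cluster points, I would invoke the robustness property recorded above. Let $x^*$ be any cluster point of $(x_n^*)_{n\in\mathbb{N}}$, say $x_{n_k}^*\to x^*$ along a subsequence. Lipschitz continuity makes $f$ continuous at $x$, so $x_{n_k}\to x$ yields $f(x_{n_k})\to f(x)$, that is $x_{n_k}\stackrel{f}{\to}x$. Together with $x_{n_k}^*\in\partial_L f(x_{n_k})$ and $x_{n_k}^*\to x^*$, the robustness property immediately delivers $x^*\in\partial_L f(x)$.

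I expect the main obstacle to be the boundedness step, specifically the passage from the defining inequality of the Fr\'echet subdifferential to the norm bound $\|y^*\|\le L$: one must choose the test direction carefully and control the $\liminf$ by the Lipschitz estimate uniformly in small $t$. Once this estimate is in hand, the transfer to limiting subgradients and the cluster-point argument are routine applications of the definition and the robustness property.
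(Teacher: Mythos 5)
Your proof is correct, and its overall architecture coincides with the paper's: both arguments reduce boundedness to the claim that the local Lipschitz modulus bounds every limiting subgradient at points near $x$, and both handle cluster points by combining $x_{n_k}\stackrel{f}{\to}x$ (a consequence of Lipschitz continuity near $x$) with the robustness property of $\partial_L$. The one substantive difference is how the subgradient bound is obtained. The paper simply cites \cite[Corollary~1.81]{Mordukhovich2006}, whereas you reprove it from the definitions: testing the defining $\liminf$ of $\widehat{\partial}f(y)$ along $z=y+tv$, $v=y^*/\|y^*\|$, $t\downarrow 0$, gives $\|y^*\|=\langle y^*,v\rangle\leq \liminf_{t\downarrow 0}\bigl(f(y+tv)-f(y)\bigr)/t\leq L$ --- valid because restricting a $\liminf$ to a path can only increase it and $y+tv$ stays in the ball for small $t$ --- and the bound passes from $\widehat{\partial}f$ to $\partial_L f$ because limiting subgradients are, by definition, limits of Fr\'echet subgradients at nearby points. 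Your version is thus fully self-contained (it in effect reproves the cited corollary in the finite-dimensional setting), at the cost of a little extra work; the paper's citation is shorter but leans on external machinery. Both proofs are complete, and your cluster-point step is identical to the paper's.
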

\begin{proof}
By the Lipschitz continuity of $f$ around $x$, there are a neighborhood $V$ of $x$ and a constant $\ell_V \in \mathbb{R}_+$ such that $f$ is Lipschitz continuous on $V$ with modulus $\ell_V$. Then, by \cite[Corollary~1.81]{Mordukhovich2006}, for all $v\in V$ and $v^*\in \partial_L f(v)$, one has $\|v^*\|\leq \ell_V$. Since $x_n \to x$ as $n\to +\infty$, there is $n_0\in \mathbb{N}$ such that, for all $n\geq n_0$, $x_n \in V$, which implies that $\|x^*_n\|\leq \ell_V$. This means $(x_n^*)_{n\in \mathbb{N}}$ is bounded.  

Now, let $x^*$ be a cluster point of $(x_n^*)_{n\in \mathbb{N}}$, i.e., there exists a subsequence $(x_{k_n}^*)_{n\in \mathbb{N}}$ such that $x_{k_n}^*\to x^*$ as $n\to +\infty$. On the other hand, we have from the convergence of $(x_n)_{n\in \mathbb{N}}$ and the Lipschitz continuity of $f$ around $x$ that $x_{k_n} \stackrel{f}{\to} x$. Therefore, $x^*\in \partial_L f(x)$ due to the robustness property of the limiting subdifferential.
\end{proof}

We end this section with the definitions of stationary points for the problem \eqref{eq:P}. A point $\overline{x}\in C$ is said to be a 
\begin{itemize}
\item 
\emph{stationary point} of \eqref{eq:P} if $0\in \partial_L(f+\iota_C+h\circ A-g)(\overline{x})$,
\item  
\emph{lifted stationary point} of \eqref{eq:P} if $0\in \partial_L(f+\iota_C)(\overline{x})+A^*\nabla h(A\overline{x}) -\partial_L g(\overline{x})$. 
\end{itemize}
Here $A^*$ is the adjoint mapping of the linear mapping $A$.

\section{Proximal subgradient algorithm with extrapolation}
\label{sec:algorithm} 

We now propose our extrapolated proximal subgradient algorithm for solving problem \eqref{eq:P} with guaranteed convergence to stationary points.

\begin{tcolorbox}[
	left=0pt,right=0pt,top=0pt,bottom=0pt,
	colback=blue!10!white, colframe=blue!50!white,
  	boxrule=0.2pt,
  	breakable]
\begin{algorithm}[Proximal subgradient algorithm with extrapolation]
\label{algo:extrapolated}
\step{}
Let $x_{-1} =x_0\in C$ and set $n =0$. Let $\overline{\lambda} \in \mathbb{R}_+$, $\overline{\mu} \in \mathbb{R}_+$, and $\delta \in \mathbb{R}_{++}$.

\step{}\label{step:main}
Let $g_n\in \partial_L g(x_n)$, $u_n=x_n+\lambda_n(x_n-x_{n-1})$ with $\lambda_n \in [0, \overline{\lambda}]$, and $v_n=x_n+\mu_n(x_n-x_{n-1})$ with $\mu_n \in [0, \overline{\mu}\tau_n]$. Choose $\tau_n\in \left(0, 1/(\beta +2\delta +\ell\|A\|^2\left(2\overline{\lambda}+1\right) +2\overline{\mu}) \right]$ and compute
\begin{align*}
x_{n+1} \in 
\argmin_{x\in C} \left( f(x) +\tfrac{1}{2\tau_n}\|x-v_n+\tau_n A^*\nabla h(Au_n)-\tau_n g_n\|^2 \right).
\end{align*}
\step{}
If a termination criterion is not met, set $n =n+1$ and go to Step~\ref{step:main}.
\end{algorithm}
\end{tcolorbox}

\begin{remark}[Discussion of the algorithm structure and extrapolation parameters]
\label{remark:algo}
Some comments on Algorithm~\ref{algo:extrapolated} are in order.
\begin{enumerate}
\item 
Recalling that the proximal operator of a proper function $\phi\colon \mathcal{H}\to \left(-\infty,+\infty\right]$ is defined by
\begin{align*}
\prox_{\phi}(x) =\argmin_{y\in \mathcal{H}}\left(\phi(y) + \frac{1}{2}\|y-x\|^2\right),
\end{align*}
we see that the update of $x_{n+1}$ in Step~2 can be written as
\begin{align*}
x_{n+1} \in \prox_{\tau_n (f+\iota_C)}(v_n-\tau_n A^*\nabla h(Au_n)+\tau_n g_n).
\end{align*}
This can be done efficiently for various specific structures of $f$ and $C$. For example, when $f$ is a convex quadratic function and $C$ is a polyhedral set, computing the proximal operator of $\tau_n (f+\iota_C)$ is equivalent to solving a convex quadratic programming problem. When $f$ is a nonconvex quadratic function and $C$ is the unit sphere, this reduces to a trust region problem which can be solved as a generalized eigenvalue problem or a semi-definite programming problem. In addition, the proximal operator can also have closed form solution for some nonconvex and nonsmooth functions, e.g., $f(x) =\|x\|_1 - \alpha\|x\|$ with $\alpha\in \mathbb{R}_+$ (see \cite[Lemma 1]{Lou2017}). For further tractable cases, see, e.g., \cite[Remark~4.1]{Bo2021}. 

\item 
Let us consider the case when $A$ is the identity mapping and $C = \mathcal{H}$. We fix an arbitrary $\tau \in \left(0, 1/\ell\right)$ and choose $\overline{\lambda} =\overline{\mu} =0$ (which yields $\lambda_n = \mu_n =0$), $\delta \in \left(0, 1/(2\tau) -\ell/2\right)$, and $\tau_n =\tau$. Then the update of $x_{n+1}$ in Step~2 becomes
\begin{align*}
x_{n+1}\in \prox_{\tau f}(x_n-\tau \nabla h(x_n) + \tau g_n),
\end{align*}
which is the so-called \emph{generalized proximal point algorithm} (GPPA) in \cite{An2016}, where $g$ is assumed to be convex (In this case, $\beta =0$ and $1/\tau_n = 1/\tau >2\delta +\ell =\beta +2\delta +\ell\|A\|^2\left(2\overline{\lambda}+1\right) +2\overline{\mu}$). 

\item
In the case where $h\equiv0$, the objective function $F$ reduces to $f-g$ and the update of $x_{n+1}$ in Step~2 reduces to
\begin{align*}
x_{n+1} \in \prox_{\tau_n (f+\iota_C)}(v_n+\tau_n g_n).
\end{align*}
In turn, if $C = \mathcal{H}$ and $\mu_n=0$, Algorithm~\ref{algo:extrapolated} reduces to the \emph{proximal linearized algorithm} proposed in \cite{Souza2015} which requires that $f$ and $g$ are convex. 

\item 
When $g \equiv 0$, $A$ is the identity mapping, and $C = \mathcal{H}$, the objective function reduces to $f+h$. By choosing $\lambda_n =\mu_n$, we have
\begin{align*}
    x_{n+1} \in \prox_{\tau_n f}(u_n-\tau_n\nabla h(u_n))
\end{align*}
and Algorithm~\ref{algo:extrapolated} reduces to the \emph{inertial forward-backward algorithm} studied in \cite{Attouch2018}, in which an additional requirement of the convexity of $h$ is imposed.

\item 
Motivated by the popular parameter used in FISTA and also its variants \cite[Chapter~10]{Beck2017}, a plausible option for extrapolation parameters $\lambda_n$ and $\mu_n$ (which will be used in our computation later) is that
\begin{equation*}
\lambda_n =\overline{\lambda}\frac{\kappa_{n-1}-1}{\kappa_n} \text{~~and~~} \mu_n =\overline{\mu}\tau_n\frac{\kappa_{n-1}-1}{\kappa_n},    
\end{equation*}
where $\kappa_{-1} =\kappa_0 =1$ and $\kappa_{n+1} =\frac{1+\sqrt{1+4\kappa_n^2}}{2}$. It can be seen that, for all $n\in \mathbb{N}$, $1\leq \kappa_{n-1} <\kappa_n +1$, and so $\lambda_n \in [0, \overline{\lambda}]$ and $\mu_n \in [0, \overline{\mu}\tau_n]$. We can also reset $\kappa_{n-1} =\kappa_n =1$ whenever $n$ is a multiple of some fix integer $n_0$.   
\end{enumerate}
\end{remark}

From now on, let $(x_n)_{n\in \mathbb{N}}$ be a sequence generated by Algorithm~\ref{algo:extrapolated}. Under suitable assumptions, we show in the next theorem that $(x_n)_{n\in \mathbb{N}}$ is bounded and any of its cluster points is a stationary point of problem \eqref{eq:P}.

\begin{theorem}[Subsequential convergence]
\label{theorem:cvg_ex}
For problem \eqref{eq:P}, suppose that the function $F$ is bounded from below on $C$ and that the set $C_0 :=\{x\in C: F(x) \leq F(x_0)\}$ is bounded. Set $c :=\frac{1}{2}(\ell\|A\|^2\overline{\lambda}+\overline{\mu})$. Then the following statements hold:
\begin{enumerate}
\item\label{t:cvg_decrease2}
For all $n\in \mathbb{N}$,
\begin{align}\label{eq:decrease}
&(F(x_{n+1}) + c\|x_{n+1}-x_{n}\|^2)+\delta\|x_{n+1}-x_{n}\|^2 
\leq  F(x_n)+ c\|x_{n}-x_{n-1}\|^2    
\end{align}
and the sequence $(F(x_n))_{n\in \mathbb{N}}$ is convergent.

\item\label{t:cvg_seq2}
The sequence $(x_n)_{n\in \mathbb{N}}$ is bounded and $x_{n+1}-x_n\to 0$ as $n\to +\infty$.
\item\label{t:cvg_crit2}
Suppose that $\liminf_{n\to +\infty} \tau_n =\overline{\tau} >0$ and let $\overline{x}$ be a cluster point of $(x_n)_{n\in \mathbb{N}}$. Then $\overline{x}\in C\cap \dom f$, $F(x_n)\to F(\overline{x})$, and $\overline{x}$ is a lifted stationary point of \eqref{eq:P}. Moreover, $\overline{x}$ is a stationary point of \eqref{eq:P} provided that $g$ is strictly differentiable on an open set containing $C\cap \dom f$.
\end{enumerate}
\end{theorem}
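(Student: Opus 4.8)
The plan is to treat the three items in order, extracting everything from the first-order optimality condition of the proximal subproblem in Step~\ref{step:main}. Writing $w_n := v_n - \tau_n A^*\nabla h(Au_n) + \tau_n g_n$, the iterate $x_{n+1}$ minimizes $f + \iota_C + \tfrac{1}{2\tau_n}\|\cdot - w_n\|^2$ over $\mathcal{H}$, so comparing its value with that at the feasible point $x_n$ and expanding the squared norms by the three-point identity yields an upper bound on $f(x_{n+1}) - f(x_n)$ in terms of $\|x_{n+1}-x_n\|^2$ and $\langle x_{n+1}-x_n, x_n - w_n\rangle$. To convert this into a bound on $F(x_{n+1}) - F(x_n)$ I would add two standard estimates: the descent lemma for the $\ell\|A\|^2$-Lipschitz gradient map $A^*\nabla h(A\cdot)$ applied to $h(Ax_{n+1}) - h(Ax_n)$, and the weak-convexity subgradient inequality $g(x_{n+1}) \geq g(x_n) + \langle g_n, x_{n+1}-x_n\rangle - \tfrac{\beta}{2}\|x_{n+1}-x_n\|^2$ for the $-g$ term. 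The key algebraic feature is that the $g_n$ contributions cancel, the mismatch between $\nabla h(Au_n)$ and $\nabla h(Ax_n)$ is controlled by Lipschitz continuity together with $\|u_n - x_n\| \leq \overline{\lambda}\|x_n - x_{n-1}\|$, and the extrapolation cross term by $\mu_n \leq \overline{\mu}\tau_n$ and Young's inequality.

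Collecting terms, the coefficient of $\|x_n - x_{n-1}\|^2$ on the right turns out to be exactly $c$, and after forming the Lyapunov sequence $\Phi_n := F(x_n) + c\|x_n-x_{n-1}\|^2$ the remaining coefficient of $\|x_{n+1}-x_n\|^2$ equals $\tfrac12\big(-1/\tau_n + \beta + \ell\|A\|^2(2\overline{\lambda}+1) + 2\overline{\mu}\big)$, which the step-size rule $1/\tau_n \geq \beta + 2\delta + \ell\|A\|^2(2\overline{\lambda}+1) + 2\overline{\mu}$ makes at most $-\delta$; this is precisely \eqref{eq:decrease}, i.e.\ $\Phi_{n+1} + \delta\|x_{n+1}-x_n\|^2 \leq \Phi_n$. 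Since $F$ is bounded below on $C$ and $c \geq 0$, the sequence $(\Phi_n)$ is nonincreasing and bounded below, hence convergent; telescoping gives $\sum_n \|x_{n+1}-x_n\|^2 < +\infty$, so $x_{n+1}-x_n \to 0$ and $F(x_n) = \Phi_n - c\|x_n-x_{n-1}\|^2 \to \lim_n \Phi_n$, proving (\ref{t:cvg_decrease2}). For (\ref{t:cvg_seq2}), monotonicity of $\Phi_n$ together with $x_{-1}=x_0$ gives $F(x_n) \leq \Phi_n \leq \Phi_0 = F(x_0)$, so $x_n \in C_0$ for all $n$, and boundedness follows from the hypothesis on $C_0$.

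For (\ref{t:cvg_crit2}), let $\overline{x}$ be the limit of a subsequence $(x_{k_n})$; since $C$ is closed, $\overline{x}\in C$. Rewriting the optimality of $x_{k_n}$ (produced from index $k_n-1$) as the inclusion $\xi_{k_n} := \tfrac{1}{\tau_{k_n-1}}(v_{k_n-1} - x_{k_n}) - A^*\nabla h(Au_{k_n-1}) + g_{k_n-1} \in \partial_L(f+\iota_C)(x_{k_n})$, I would pass to the limit term by term. Because consecutive differences vanish, $u_{k_n-1}, v_{k_n-1} \to \overline{x}$; the hypothesis $\liminf_n \tau_n = \overline{\tau} > 0$ together with the upper bound on $\tau_n$ keeps $1/\tau_{k_n-1}$ bounded, so $\tfrac{1}{\tau_{k_n-1}}(v_{k_n-1}-x_{k_n}) \to 0$; continuity of $\nabla h$ gives $A^*\nabla h(Au_{k_n-1}) \to A^*\nabla h(A\overline{x})$; and since $g$ is continuous and weakly convex, hence locally Lipschitz around $\overline{x}$, Lemma~\ref{l:upsemicont} ensures $(g_{k_n-1})$ is bounded and, along a further subsequence, $g_{k_n-1} \to g^* \in \partial_L g(\overline{x})$. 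Thus $\xi_{k_n} \to -A^*\nabla h(A\overline{x}) + g^*$.

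The main obstacle is the $f$-attentive convergence needed to invoke the robustness property of $\partial_L$: I must show $(f+\iota_C)(x_{k_n}) \to (f+\iota_C)(\overline{x})$, equivalently $f(x_{k_n}) \to f(\overline{x})$ with $\overline{x} \in \dom f$. I would obtain this by squeezing. On one hand, continuity of $h$ and $g$ and the convergence of $(F(x_n))$ from (\ref{t:cvg_decrease2}) force $f(x_{k_n}) = F(x_{k_n}) - h(Ax_{k_n}) + g(x_{k_n})$ to converge to a finite limit, and lower semicontinuity of $f$ yields $f(\overline{x}) \leq \lim_n f(x_{k_n})$. On the other hand, comparing the proximal value at $x_{k_n}$ with that at the feasible point $\overline{x}$ and passing to the limit (all quadratic remainders vanish since $x_{k_n}, v_{k_n-1} \to \overline{x}$) gives $\limsup_n f(x_{k_n}) \leq f(\overline{x})$. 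Together these force $f(x_{k_n}) \to f(\overline{x}) < +\infty$, so $\overline{x} \in \dom f$ and $F(x_n) \to F(\overline{x})$. The robustness property then gives $-A^*\nabla h(A\overline{x}) + g^* \in \partial_L(f+\iota_C)(\overline{x})$, i.e.\ $0 \in \partial_L(f+\iota_C)(\overline{x}) + A^*\nabla h(A\overline{x}) - \partial_L g(\overline{x})$, so $\overline{x}$ is a lifted stationary point. Finally, when $g$ is strictly differentiable near $C\cap\dom f$, the sum rule in Lemma~\ref{lemma:sum} identifies $\partial_L(f+\iota_C+h\circ A - g)(\overline{x}) = \partial_L(f+\iota_C)(\overline{x}) + A^*\nabla h(A\overline{x}) - \nabla g(\overline{x})$, and since then $\partial_L g(\overline{x}) = \{\nabla g(\overline{x})\}$, lifted stationarity upgrades to $0 \in \partial_L(f+\iota_C+h\circ A - g)(\overline{x})$, i.e.\ stationarity.
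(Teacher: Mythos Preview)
Your argument is correct and, for parts \ref{t:cvg_decrease2} and \ref{t:cvg_seq2}, essentially identical to the paper's: both compare the proximal value at $x_{n+1}$ with that at $x_n$, add the descent lemma for $h\circ A$ and the weak-convexity inequality for $g$, control the two extrapolation terms by Young's inequality, and form the Lyapunov sequence $\Phi_n=F(x_n)+c\|x_n-x_{n-1}\|^2$ to obtain \eqref{eq:decrease} and its consequences.

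For part \ref{t:cvg_crit2} there is a small but genuine difference in the closing step. The paper keeps the \emph{variational} inequality $f(x)\geq f(x_{k_n})+\langle\nabla h(Au_{k_n-1}),Ax_{k_n}-Ax\rangle-\langle g_{k_n-1},x_{k_n}-x\rangle+\tfrac{1}{2\tau_{k_n-1}}(\|x_{k_n}-v_{k_n-1}\|^2-\|x-v_{k_n-1}\|^2)$ for all $x\in C$, passes to the limit to conclude that $\overline{x}$ minimizes $f+\iota_C+\langle A^*\nabla h(A\overline{x})-\overline{g},\cdot\rangle+\tfrac{1}{2\overline{\tau}}\|\cdot-\overline{x}\|^2$, and then invokes Fermat's rule. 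You instead work with the pointwise inclusion $\xi_{k_n}\in\partial_L(f+\iota_C)(x_{k_n})$ and close the graph of $\partial_L(f+\iota_C)$ via the robustness property. Both routes require the same $f$-attentive convergence $f(x_{k_n})\to f(\overline{x})$, which you obtain by the same squeeze (lower semicontinuity plus the proximal comparison with $x=\overline{x}$). Your route is a touch more direct; the paper's route has the mild advantage that it avoids extracting a further subsequence to make $\xi_{k_n}$ converge (the variational inequality survives the limit for every $x$ without needing $1/\tau_{k_n-1}$ itself to converge).
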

\begin{proof}
\ref{t:cvg_decrease2} \& \ref{t:cvg_seq2}: 
We see from Step~2 of Algorithm~\ref{algo:extrapolated} that, for all $n\in \mathbb{N}$, $x_n\in C$ and
\begin{align*}
x_{n+1} &\in 
\argmin_{x\in C} \left( f(x) +\frac{1}{2\tau_n}\|x-v_n+\tau_n A^*\nabla h(Au_n)-\tau_n g_n\|^2 \right)\\
&=\argmin_{x\in C} \left( f(x) +\frac{1}{2\tau_n}\|x-v_n\|^2 +\langle A^*\nabla h(Au_n)-g_n, x-v_n\rangle \right)\\
&=\argmin_{x\in C} \left( f(x) +\langle A^*\nabla h(Au_n), x-u_n\rangle -\langle g_n, x-v_n\rangle  \right.\\
&\qquad \left. +\langle A^*\nabla h(Au_n), u_n- v_n\rangle +\frac{1}{2\tau_n}\|x-v_n\|^2 \right)\\
&= \argmin_{x\in C} \left( f(x) +\langle \nabla h(Au_n), Ax-Au_n\rangle -\langle g_n, x-v_n\rangle+\frac{1}{2\tau_n}\|x-v_n\|^2\right).
\end{align*}
Therefore, for all $n\in \mathbb{N}$ and all $x\in C$,
\begin{align*}
&f(x) +\langle \nabla h(Au_n), Ax-Au_n\rangle -\langle g_n, x-v_n\rangle +\frac{1}{2\tau_n}\|x-v_n\|^2\\ 
&\geq f(x_{n+1}) +\langle \nabla h(Au_n), Ax_{n+1}-Au_n\rangle -\langle g_n, x_{n+1}-v_n\rangle+\frac{1}{2\tau_n}\|x_{n+1}-v_n\|^2,    
\end{align*}
or equivalently,
\begin{align}\label{eq:fx+}
f(x) &\geq f(x_{n+1}) +\langle \nabla h(Au_n), Ax_{n+1}-Ax\rangle  -\langle g_n, x_{n+1}-x\rangle \notag \\
&\qquad +\frac{1}{2\tau_n}(\|x_{n+1}-v_n\|^2 -\|x-v_n\|^2).
\end{align}
By the Lipschitz continuity of $\nabla h$, we derive from \cite[Lemma~1.2.3]{Nesterov2018} that
\begin{align*}
&\langle \nabla h(Au_n), Ax_{n+1}-Ax_n\rangle \\
&= \langle \nabla h(Ax_n), Ax_{n+1}-Ax_n\rangle+\langle \nabla h(Au_n) -\nabla h(Ax_n), Ax_{n+1}-Ax_n\rangle \\
&\geq h(Ax_{n+1}) -h(Ax_n) -\frac{\ell}{2}\|Ax_{n+1}-Ax_n\|^2 \\
&\qquad -\|\nabla h(Au_n) -\nabla h(Ax_n)\| \|Ax_{n+1}-Ax_n\| \\
&\geq h(Ax_{n+1}) -h(Ax_n) -\frac{\ell\|A\|^2}{2}\|x_{n+1}-x_n\|^2 -\ell\|A\|^2\|u_n-x_n\| \|x_{n+1}-x_n\|.
\end{align*}
As $g_n\in \partial_L g(x_n)$ and $x_n,~ x_{n+1}\in C$, it follows from the weak convexity of $g$ and \cite[Lemma~4.1]{Bo2021} that 
\begin{equation*}
\langle g_n, x_{n+1}-x_n\rangle \leq g(x_{n+1}) -g(x_n) +\frac{\beta}{2}\|x_{n+1}-x_n\|^2.   
\end{equation*}
Letting $x =x_n\in C$ in \eqref{eq:fx+} and combining with the last two inequalities, we obtain that
\begin{align*}
&f(x_n) +h(Ax_n) -g(x_n) \\
&\geq f(x_{n+1}) +h(Ax_{n+1}) -g(x_{n+1})-\left(\frac{\ell\|A\|^2}{2}+\frac{\beta}{2}\right)\|x_{n+1}-x_n\|^2 \\
&\qquad  -\ell\|A\|^2\|u_n-x_n\| \|x_{n+1}-x_n\|+\frac{1}{2\tau_n}(\|x_{n+1}-v_n\|^2 -\|x_n-v_n\|^2).
\end{align*}
By the definition of $u_n$ and $v_n$, we have 
$x_n-u_n =-\lambda_n(x_n-x_{n-1})$, $x_{n+1}-v_n =(x_{n+1}-x_n) -\mu_n(x_n-x_{n-1})$, $x_n-v_n =-\mu_n(x_n-x_{n-1})$, and so
\begin{align*}
F(x_n) 
&\geq F(x_{n+1}) -\left(\frac{\ell\|A\|^2}{2}+\frac{\beta}{2}\right)\|x_{n+1}-x_n\|^2 -\ell\|A\|^2\lambda_n\|x_n-x_{n-1}\| \|x_{n+1}-x_n\|\\
&\qquad +\frac{1}{2\tau_n}(\|x_{n+1}-x_n\|^2-2\mu_n\langle x_{n+1}-x_n, x_n-x_{n-1} \rangle)\\
&\geq F(x_{n+1}) +\left(\frac{1}{2\tau_n}-\frac{\ell\|A\|^2}{2}-\frac{\beta}{2}\right)\|x_{n+1}-x_n\|^2 \\
&\qquad -\left(\ell\|A\|^2\lambda_n+\frac{\mu_n}{\tau_n}\right)\|x_{n+1}-x_n\| \|x_n-x_{n-1}\| \\
&\geq F(x_{n+1})-\left(\frac{\ell\|A\|^2\lambda_n}{2}+\frac{\mu_n}{2\tau_n}\right)\|x_n-x_{n-1}\|^2  \\
&\qquad +\left(\frac{1}{2\tau_n}-\frac{\ell\|A\|^2}{2}-\frac{\beta}{2}-\frac{\ell\|A\|^2\lambda_n}{2}-\frac{\mu_n}{2\tau_n}\right)\|x_{n+1}-x_n\|^2,
\end{align*}
where we have used $\langle x_{n+1}-x_n, x_n-x_{n-1} \rangle \leq \|x_{n+1}-x_n\| \|x_n-x_{n-1}\| \leq  \frac{1}{2}(\|x_{n+1}-x_n\|^2 + \|x_n-x_{n-1} \|^2)$. Rearranging terms yields
\begin{align*}
&F(x_n) +\left(\frac{\ell\|A\|^2\lambda_n}{2}+\frac{\mu_n}{2\tau_n}\right)\|x_n-x_{n-1}\|^2 \\ 
&\geq F(x_{n+1}) +\left(\frac{1}{2\tau_n}-\frac{\ell\|A\|^2}{2}-\frac{\beta}{2}-\frac{\ell\|A\|^2\lambda_n}{2}-\frac{\mu_n}{2\tau_n}\right)\|x_{n+1}-x_n\|^2.
\end{align*}
Since $\lambda_n \in [0, \overline{\lambda}]$, $\mu_n \in [0, \overline{\mu}\tau_n]$, and 
$1/\tau_n \geq \beta +2\delta +\ell\|A\|^2\left(2\overline{\lambda}+1\right) +2\overline{\mu}$, it follows that
\begin{align*}
F(x_n) +\frac{1}{2}(\ell\|A\|^2\overline{\lambda}+\overline{\mu})\|x_n-x_{n-1}\|^2 \geq F(x_{n+1})+\frac{1}{2}(2\delta + \ell\|A\|^2\overline{\lambda}+\overline{\mu})\|x_{n+1}-x_n\|^2,
\end{align*}
which proves \eqref{eq:decrease}.

Recalling $c =\frac{1}{2}(\ell\|A\|^2\overline{\lambda}+\overline{\mu})$ and setting $\mathcal{F}_n :=F(x_n) +c\|x_n-x_{n-1}\|^2$, we have
\begin{equation}\label{eq:30}
    \mathcal{F}_n \geq \mathcal{F}_{n+1} + \delta\|x_{n+1}-x_n\|^2.
\end{equation}
Since $\delta >0$, the sequence $(\mathcal{F}_n)_{n\in \mathbb{N}}$ is nonincreasing. Since $F$ is bounded below on $C$, the sequence $(\mathcal{F}_n)_{n\in \mathbb{N}}$ is bounded below, and it is therefore convergent.
After rearranging \eqref{eq:30} and performing telescoping, we obtain that, for all $m\in \mathbb{N}$,
\begin{equation*}
\delta\sum_{n=0}^m \|x_{n+1}-x_{n}\|^2 \leq\sum_{n=0}^{m} (\mathcal{F}_n- \mathcal{F}_{n+1})=\mathcal{F}_0-\mathcal{F}_{m+1}.
\end{equation*}
Denoting $\overline{\mathcal{F}} :=\lim_{n\to +\infty} \mathcal{F}_n$ and letting $m\to +\infty$, we obtain that
\begin{equation*}
    \sum_{n=0}^{+\infty} \|x_{n+1}-x_{n}\|^2 \leq \frac{1}{\delta}(\mathcal{F}_0-\overline{\mathcal{F}}) < +\infty.
\end{equation*}
Therefore, as $n \to +\infty$, $x_{n+1}-x_{n} \to 0$, and so $F(x_n) =\mathcal{F}_n-c\|x_n-x_{n-1}\|^2 \to \overline{\mathcal{F}}$, which means that the sequence $(F(x_n))_{n\in\mathbb{N}}$ is convergent.

Now, we observe that
\begin{align*}
    F(x_n)=\mathcal{F}_n - c\|x_n-x_{n-1}\|^2 \leq \mathcal{F}_n \leq \mathcal{F}_0=F(x_0),
\end{align*}
which implies $x_n\in C_0 =\{x\in C: F(x) \leq F(x_0)\}$. Hence, $(x_n)_{n\in \mathbb{N}}$ is bounded due to the boundedness of $C_0$.

\ref{t:cvg_crit2}: As $\overline{x}$ is a cluster point of the sequence $(x_n)_{n\in \mathbb{N}}$, there exists a subsequence $(x_{k_n})_{n\in \mathbb{N}}$ of $(x_n)_{n\in \mathbb{N}}$ such that $x_{k_n}\to \overline{x}$ as $n \to +\infty$. Then $\overline{x}\in C$ and, since $x_{n+1}-x_n\to 0$, one has $x_{k_n-1}\to \overline{x}$, so as $u_{k_n-1}$ and $v_{k_n-1}$. Since $g +\frac{\beta}{2}\|\cdot\|^2$ is a continuous convex function on an open set $O$ containing $C$, we obtain from \cite[Example~9.14]{Rockafellar1998} that $g$ is locally Lipschitz continuous on $O$. In view of Lemma~\ref{l:upsemicont}, since $x_{k_n}\to \overline{x}$ as $n\to +\infty$, passing to a subsequence if necessary, we can assume that $g_{k_n}\to \overline{g}\in \partial_L g(\overline{x})$ as $n\to +\infty$.    

Replacing $n$ in \eqref{eq:fx+} with $k_n-1$, we have for all $n\in \mathbb{N}$ and all $x\in C$ that
\begin{align}\label{eq:32}
f(x) &\geq f(x_{k_n}) +\langle \nabla h(Au_{k_n-1}), Ax_{k_n}-Ax\rangle -\langle g_{k_n-1}, x_{k_n}-x\rangle \\ \notag 
&\qquad +\frac{1}{2\tau_{k_n-1}}(\|x_{k_n}-v_{k_n-1}\|^2 -\|x-v_{k_n-1}\|^2).
\end{align}
As $\liminf_{n\to +\infty} \tau_n =\overline{\tau} >0$, letting $x=\overline{x}$ and $n\to \infty$, we obtain that $f(\overline{x}) \geq \limsup_{n\to +\infty} {f(x_{k_n})}$. Since $f$ is lower semicontinuous, it follows that $\lim_{n \to +\infty} f(x_{k_n}) =f(\overline{x})$. On the other hand, $\lim_{n \to +\infty} g(x_{k_n}) =g(\overline{x})$ and $\lim_{n \to +\infty} h(Ax_{k_n}) =h(A\overline{x})$ due to the continuity of $g$ and $h$. Therefore, 
\begin{align*}
\lim_{n \to +\infty} F(x_n) =\lim_{n \to +\infty} F(x_{k_n}) 
&=\lim_{n \to +\infty} (f(x_{k_n})+h(Ax_{k_n}) -g(x_{k_n}))\\
&=f(\overline{x})+h(A\overline{x}) -g(\overline{x}) =F(\overline{x}).    
\end{align*}
Next, by letting $n\to +\infty$ in \eqref{eq:32}, for all $x\in C$,
\begin{equation*}
f(x) \geq f(\overline{x}) +\langle \nabla h(A\overline{x}), A\overline{x}-Ax\rangle -\langle \overline{g}, \overline{x}-x\rangle -\frac{1}{2\overline{\tau}}\|x-\overline{x}\|^2,    
\end{equation*}
which can be rewritten as
\begin{align*}
&f(x)+\langle \nabla h(A\overline{x}),Ax-A\overline{x}\rangle -\langle \overline{g}, x-\overline{x}\rangle+\frac{1}{2\overline{\tau}}\|x-\overline{x}\|^2 \\
&\geq f(\overline{x})+\langle \nabla h(A\overline{x}),A\overline{x}-A\overline{x}\rangle -\langle \overline{g}, \overline{x}-\overline{x}\rangle+\frac{1}{2\overline{\tau}}\|\overline{x}-\overline{x}\|^2.
\end{align*}
This means $\overline{x}$ is a minimizer of the function 
$(f+\langle \nabla h(A\overline{x}),A\cdot-A\overline{x}\rangle -\langle \overline{g}, \cdot-\overline{x}\rangle+\frac{1}{2\overline{\tau}}\|\cdot-\overline{x}\|^2)(x)$ over $C$. Hence, $0\in \partial_L(f+\langle \nabla h(A\overline{x}),A\cdot-A\overline{x}\rangle -\langle \overline{g}, \cdot-\overline{x}\rangle+\frac{1}{2\overline{\tau}}\|\cdot-\overline{x}\|^2+\iota_C)(\overline{x}) =\partial_L(f+\iota_C)(\overline{x}) +A^*\nabla h(A\overline{x}) -\overline{g}$, and we must have $\overline{x}\in C\cap \dom f$. Since $\overline{g}\in \partial_L g(\overline{x})$, we deduce that $0\in \partial_L(f+\iota_C)(\overline{x}) +A^*\nabla h(A\overline{x}) -\partial_L g(\overline{x})$, i.e., $\overline{x}$ is a lifted stationary point of \eqref{eq:P}. 

In addition, if we further require that $g$ is strictly differentiable, then Lemma~\ref{lemma:sum} implies that $\overline{x}$ is a stationary point of \eqref{eq:P}. 
\end{proof}

Next, we establish the convergence of the full sequence generated by Algorithm~\ref{algo:extrapolated}. In order to do this, we recall that a proper lower semicontinuous function $G\colon \mathcal{H}\to \left(-\infty, +\infty\right]$ satisfies the \emph{Kurdyka--Łojasiewicz (KL) property} \cite{Kurdyka1998,Loja63} at $\overline{x} \in \dom \partial_L G$ if there are $\eta \in (0, +\infty]$, a neighborhood $V$ of $\overline{x}$, and a continuous concave function $\phi: \left[0, \eta\right) \to \mathbb{R}_+$ such that $\phi$ is continuously differentiable with $\phi' > 0$ on $(0, \eta)$, $\phi(0) = 0$, and, for all $x \in V$ with $
G(\overline{x}) < G(x) < G(\overline{x}) + \eta$, 
\begin{equation*}
\phi'(G(x) -G(\overline{x})) \dist(0, \partial_L G(x)) \geq 1.   
\end{equation*}
We say that $G$ is a \emph{KL function} if it satisfies the KL property at any point in $\dom \partial_L G$. If $G$ satisfies the KL property at $\overline{x} \in \dom \partial_L G$, in which the corresponding function $\phi$ can be chosen as $\phi(t) = c t ^{1 - \theta}$ for some $c \in \mathbb{R}_{++}$ and $\theta \in [0, 1)$, then $G$ is said to satisfy the \emph{KL property at $\overline{x}$ with exponent $\theta$}. The function $G$ is called a \emph{KL function with exponent $\theta$} if it is a KL function and has the same exponent $\theta$ at any $x \in \dom \partial_L G$.

\begin{theorem}[Full sequential convergence]
\label{theorem:global2}  
For problem \eqref{eq:P}, suppose that $F$ is bounded from below on $C$, that the set $C_0 :=\{x\in C: F(x) \leq F(x_0)\}$ is bounded, that $g$ is differentiable on an open set containing $C\cap \dom f$ whose gradient $\nabla g$ is Lipschitz continuous with modulus $\ell_g$ on $C\cap \dom f$, and that $\liminf_{n\to +\infty} \tau_n =\overline{\tau} >0$. Define 
\begin{equation*}
G(x,y) :=F(x) +\iota_C(x) +c\|x-y\|^2,    
\end{equation*}
where $c =\frac{1}{2}(\ell\|A\|^2\overline{\lambda}+\overline{\mu})$, and suppose that $G$ satisfies the KL property at $(\overline{x},\overline{x})$ for every $\overline{x} \in C\cap \dom f$. Then
\begin{enumerate}
\item
The sequence $(x_n)_{n\in\mathbb{N}}$ converges to a stationary point $x^*$ of \eqref{eq:P} and $\sum_{n=0}^{+\infty}\|x_{n+1}-x_n\|<+\infty$.
\item 
Suppose further that $G$ satisfies the KL property with exponent $\theta\in [0,1)$ at $(\overline{x},\overline{x})$ for every $\overline{x} \in C\cap \dom f$. The following statements hold:
\begin{enumerate}
\item
If $\theta =0$, then  $(x_n)_{n\in\mathbb{N}}$ converges to $x^*$ in a finite number of steps. 
\item
If $\theta\in (0,\frac{1}{2}]$, then there exist $\gamma\in \mathbb{R}_{++}$ and $\rho\in \left(0,1\right)$ such that, for all $n\in \mathbb{N}$, $\|x_n-x^*\|\leq \gamma\rho^{\frac{n}{2}}$ and $\lvert F(x_n) -F(x^*) \rvert \leq \gamma\rho^n$.
\item
If $\theta\in (\frac{1}{2},1)$, then there exists $\gamma\in \mathbb{R}_{++}$ such that, for all $n\in \mathbb{N}$, $\|x_n-x^*\|\leq \gamma n^{-\frac{1-\theta}{2\theta-1}}$ and $\lvert F(x_n) -F(x^*)\rvert \leq \gamma n^{-\frac{2-2\theta}{2\theta-1}}$.
\end{enumerate}
\end{enumerate}
\end{theorem}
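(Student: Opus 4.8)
The plan is to cast Algorithm~\ref{algo:extrapolated} into the abstract Kurdyka--{\L}ojasiewicz convergence framework, using as Lyapunov function the auxiliary quantity $\mathcal{F}_n := G(x_n, x_{n-1}) = F(x_n) + c\|x_n - x_{n-1}\|^2$ (note $x_n \in C$ so $\iota_C(x_n) = 0$) evaluated at the lifted iterate $z_n := (x_n, x_{n-1}) \in \mathcal{H}\times\mathcal{H}$. I would verify the three standard ingredients: a sufficient-decrease inequality, a relative-error (subgradient) bound, and a continuity condition. The decrease is already in hand, since inequality~\eqref{eq:30} from Theorem~\ref{theorem:cvg_ex} reads $\mathcal{F}_n - \mathcal{F}_{n+1} \geq \delta\|x_{n+1}-x_n\|^2$. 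The continuity condition follows from Theorem~\ref{theorem:cvg_ex}\ref{t:cvg_crit2}: along a subsequence $x_{k_n}\to\overline{x}$ we have $F(x_{k_n})\to F(\overline{x})$ while $\|x_n-x_{n-1}\|\to 0$, hence $\mathcal{F}_{k_n}\to F(\overline{x}) = G(\overline{x},\overline{x})$.

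The crux is the subgradient bound. Since $g$ is now differentiable with $\nabla g$ Lipschitz (modulus $\ell_g$) on $C\cap\dom f$, it is strictly differentiable there, so $g_n = \nabla g(x_n)$ and, by Lemma~\ref{lemma:sum}, $\partial_L G(x_{n+1},x_n)$ has first coordinate $\partial_L(f+\iota_C)(x_{n+1}) + A^*\nabla h(Ax_{n+1}) - \nabla g(x_{n+1}) + 2c(x_{n+1}-x_n)$ and second coordinate $\{-2c(x_{n+1}-x_n)\}$. I would extract the element $\xi_{n+1} = \tfrac{1}{\tau_n}(v_n - x_{n+1}) - A^*\nabla h(Au_n) + \nabla g(x_n) \in \partial_L(f+\iota_C)(x_{n+1})$ from the first-order optimality condition of the $\argmin$ in Step~2, substitute it into the first coordinate, and bound the result. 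Using the Lipschitz continuity of $\nabla h$ and $\nabla g$, the identities $x_{n+1}-u_n = (x_{n+1}-x_n)-\lambda_n(x_n-x_{n-1})$ and $x_{n+1}-v_n = (x_{n+1}-x_n)-\mu_n(x_n-x_{n-1})$, together with $\lambda_n\le\overline{\lambda}$, $\mu_n/\tau_n \le \overline{\mu}$, and $\tau_n \geq \overline{\tau}$ for large $n$, this produces a constant $b>0$ with $\dist(0,\partial_L G(z_n)) \leq b(\|x_n-x_{n-1}\| + \|x_{n-1}-x_{n-2}\|)$ for all large $n$.

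For part~(i), let $\Omega$ be the (nonempty, compact, connected) set of cluster points of $(x_n)$; by Theorem~\ref{theorem:cvg_ex} each point of $\Omega$ is stationary (using strict differentiability of $g$) and $F\equiv F^*$ on $\Omega$, so $G\equiv F^*$ on the cluster set $\{(\overline{x},\overline{x}):\overline{x}\in\Omega\}$ of $(z_n)$. Applying the uniformized KL property of Bolte--Sabach--Teboulle over this compact set yields a single concave $\phi$, level $\eta$, and neighborhood valid for all large $n$; discarding the trivial case $\mathcal{F}_n = F^*$ (which forces finite termination through the decrease inequality), the KL inequality gives $\phi'(\mathcal{F}_n - F^*)\,\dist(0,\partial_L G(z_n)) \ge 1$. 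Writing $a_n := \|x_n - x_{n-1}\|$ and combining the concavity estimate $\phi(\mathcal{F}_n-F^*)-\phi(\mathcal{F}_{n+1}-F^*) \ge \phi'(\mathcal{F}_n-F^*)(\mathcal{F}_n-\mathcal{F}_{n+1})$ with the decrease and subgradient bounds, the arithmetic--geometric mean inequality yields $a_{n+1} \le \tfrac14(a_n + a_{n-1}) + \tfrac{b}{\delta}[\phi(\mathcal{F}_n-F^*)-\phi(\mathcal{F}_{n+1}-F^*)]$ for large $n$. Summing telescopes the $\phi$-differences, and since each $a_k$ is counted at most twice the factor $\tfrac14$ produces a coefficient $\tfrac12<1$ that absorbs the shifted step terms, proving $\sum_n a_n < +\infty$; hence $(x_n)$ is Cauchy, converges to some $x^*\in\Omega$, which is a stationary point, and $\|x_n-x^*\|\le S_n := \sum_{k\ge n} a_{k+1}$.

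For the rates in part~(ii) I would take $\phi(t) = \overline{c}\,t^{1-\theta}$, turning the KL inequality into $(\mathcal{F}_n - F^*)^{\theta} \le C(a_n + a_{n-1})$, and analyze the tail lengths $S_n$. The case $\theta=0$ forces $a_n=0$ eventually, giving finite termination; for $\theta\in(0,\tfrac12]$ I would derive a geometric contraction for $S_n$, yielding $\|x_n-x^*\|\le\gamma\rho^{n/2}$, while $|F(x_n)-F^*|\le\gamma\rho^n$ follows by relating $\mathcal{F}_n-F^*$ to the step lengths via the subgradient bound and KL inequality; for $\theta\in(\tfrac12,1)$ I would obtain a recursion of the form $S_n^{2\theta} \le M(S_{n-2}-S_n)$ and invoke the standard sequence lemma of Attouch--Bolte to get the polynomial decay $S_n = O(n^{-(1-\theta)/(2\theta-1)})$. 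I expect the principal obstacle to be the subgradient bound combined with the \emph{two-step} structure it forces: the decrease inequality controls only $a_{n+1}^2$ whereas the error bound involves $a_n+a_{n-1}$, so the recursions are three-term rather than the clean two-term recursions of the smooth proximal-gradient setting, and the summation and all three rate estimates must be organized around consecutive pairs of steps (or the tail sums $S_n$) in order to close.
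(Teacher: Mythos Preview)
Your proposal is correct and follows essentially the same route as the paper: define the lifted iterate, take the sufficient-decrease inequality from Theorem~\ref{theorem:cvg_ex}, derive the subgradient bound $\dist(0,\partial_L G)\le \eta_1\|x_{n+1}-x_n\|+\eta_2\|x_n-x_{n-1}\|$ from the first-order optimality condition of Step~\ref{step:main}, check the continuity condition, and then apply the KL machinery. The only cosmetic differences are that the paper sets $z_n=(x_{n+1},x_n)$ and, instead of writing out the recursion, invokes an abstract convergence theorem (Theorem~5.1 of \cite{Bo2021}) once the three ingredients are in place; note also that $\liminf_{n}\tau_n=\overline{\tau}$ only yields $\tau_n\ge\overline{\tau}/2$ eventually, not $\tau_n\ge\overline{\tau}$ as you wrote.
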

\begin{proof}
For each $n\in \mathbb{N}$, let $z_n =(x_{n+1},x_n)$. According to Theorem~\ref{theorem:cvg_ex}, we have that, for all $n\in \mathbb{N}$,
\begin{equation*}
G(z_{n+1}) +\delta\|x_{n+2}-x_{n+1}\|^2 \leq G(z_n),    
\end{equation*}
that the sequence $(z_n)_{n\in \mathbb{N}}$ is bounded, that $z_{n+1}-z_n\to 0$ as $n\to +\infty$, and that for every cluster point $\overline{z}$ of $(z_n)_{n\in \mathbb{N}}$, $\overline{z} = (\overline{x}, \overline{x})$, where $\overline{x}\in C\cap \dom f$ is a stationary point of \eqref{eq:P} and $G(z_n) = F(x_{n+1}) +c\|x_{n+1}-x_n\|^2\to F(\overline{x}) =G(\overline{z})$ as $n\to +\infty$.

Let $n\in \mathbb{N}$. It follows from the update of $x_{n+1}$ in Step~2 of Algorithm~\ref{algo:extrapolated} that
\begin{equation*}
0\in \partial_L (f+\iota_C)(x_{n+1}) +\frac{1}{\tau_n}(x_{n+1}-v_n+\tau_n A^*\nabla h(Au_n)-\tau_n \nabla g(x_n)),
\end{equation*}
which implies that
\begin{equation*}
\nabla g(x_n) -A^*\nabla h(Au_n) -\frac{1}{\tau_n}(x_{n+1}-v_n) \in \partial_L (f+\iota_C)(x_{n+1}).
\end{equation*}
Noting that $G(z_n) =(f+\iota_C)(x_{n+1})+h(Ax_{n+1})-g(x_{n+1})+c\|x_{n+1}-x_n\|^2$ and that
\begin{align*}
\partial_L G(z_n) &=\{\partial_L (f+\iota_C)(x_{n+1})+A^*\nabla h(Ax_{n+1})- \nabla g(x_{n+1})+2c(x_{n+1}-x_n)\}  \times \{2c(x_n-x_{n+1})\},
\end{align*}
we obtain
\begin{align*}
\dist(0,\partial_L G(z_n)) &\leq \|\nabla g(x_n) -A^*\nabla h(Au_n) -\frac{1}{\tau_n}(x_{n+1}-v_n)  +A^*\nabla h(Ax_{n+1}) \\
&\qquad -\nabla g(x_{n+1}) +2c(x_{n+1}-x_n)\| +2c\|x_n-x_{n+1}\|\\
&\leq \ell_g\|x_{n+1}-x_n\| +\ell\|A\|\|x_{n+1}-u_n\| +\frac{1}{\tau_n}\|x_{n+1}-v_n\|\\
&\qquad +4c\|x_{n+1}-x_n\|.
\end{align*}
Since $\|x_{n+1}-u_n\| \leq \|x_{n+1}-x_n\|+\lambda_n\|x_n-x_{n-1}\|$ and $\|x_{n+1}-v_n\| \leq \|x_{n+1}-x_n\|+\mu_n\|x_n-x_{n-1}\|$, we derive that
\begin{align*}
\dist(0,\partial_L G(z_n)) &\leq \left(\ell_g + \ell\|A\| + \frac{1}{\tau_n} + 4c \right)\|x_{n+1}-x_n\|\\
&\qquad +\left(\ell\|A\|\lambda_n + \frac{\mu_n}{\tau_n}\right)\|x_n-x_{n-1}\|.
\end{align*}
Since $\liminf_{n\to +\infty} \tau_n = \overline{\tau} > 0$, there exists $n_0 \in \mathbb{N}$ such that, for all $n \geq n_0$, $\tau_n \geq \overline{\tau}/2$. Recalling that $\lambda_n \leq \overline{\lambda}$ and $\frac{\mu_n}{\tau_n} \leq \overline{\mu}$, we have for all $n\geq n_0$ that
\begin{align*}
\dist(0,\partial_L G(z_n)) \leq  \eta_1\|x_{n+1}-x_n\| +\eta_2\|x_n-x_{n-1}\|,
\end{align*}
where $\eta_1 =\ell_g+\ell\|A\|+\frac{2}{\overline{\tau}}+4c$ and $\eta_2 =\ell\|A\|\overline{\lambda}+\overline{\mu}$. Now, the first conclusion follows by applying \cite[Theorem~5.1]{Bo2021} with $I =\{1,2\}$, $\lambda_1 =\frac{\eta_1}{\eta_1+\eta_2}$, $\lambda_2 =\frac{\eta_2}{\eta_1+\eta_2}$, $\Delta_n =\|x_{n+2}-x_{n+1}\|$, $\alpha_n \equiv \delta$, $\beta_n \equiv \frac{1}{\eta_1+\eta_2}$, and $\varepsilon_n \equiv 0$. The remaining conclusions follow a rather standard line of argument as used in \cite{Attouch2007,Bo2021,Li2017}, see also \cite[Theorem~3.11]{BMG22}.
\end{proof}

\begin{remark}[KL property and KL exponents]\label{remark:KLexponent} 
In the preceding theorem, the convergence of the full sequence generated by Algorithm \ref{algo:extrapolated} requires the KL property of the function $G$ with the form that $G(x,y) :=F(x) +\iota_C(x) +c\|x-y\|^2$, where $F$ is the objective function of the model problem \eqref{eq:P}, $C$ is the feasible region of problem \eqref{eq:P} and $c>0$. 
We note that this assumption holds for a broad class of model problem \eqref{eq:P} where $F$ is a semi-algebraic function and $C$ is a semi-algebraic set. More generally, it continues to hold when $F$ is a definable function and $C$ is a definable set (see \cite{Kurdyka1998,BDLS07}). 

As simple illustrations, in our case study in the next section, we will consider the following two classes of functions:
\begin{enumerate}
\item\label{l1l2} 
$F(x) = \varphi(Ax)+\gamma(\|x\|_1-\alpha\|x\|)$, where $\varphi(z) = \frac{1}{2}\|z-b\|^2$ (least square loss) or $\varphi(z) = \|z-b\|_{LL_{2,1}} = \sum_{i=1}^{m}\log \left(1+\lvert z_i-b_i \rvert^2\right)$ (Lorentzian norm loss \cite{Lorentian}), $A \in \mathbb{R}^{m\times d}$, $b\in\mathbb{R}^m$, $\alpha \in \mathbb{R}_+$, and $\gamma \in \mathbb{R}_{++}$.
\item\label{quad}  
$F(x) = \frac{1}{2}x^TMx+u^Tx+r$, where $M$ is an $(d \times d)$ symmetric  matrix , $u \in \mathbb{R}^d$, and $r \in \mathbb{R}$.
\end{enumerate}
Let $G(x,y) :=F(x) +\iota_C(x) +c\|x-y\|^2$, where $c > 0$ and $C$ is a semi-algebraic set in $\mathbb{R}^d$. Then, in both cases, $G$ is definable, and so, it satisfies the KL property at $(\overline{x},\overline{x})$ for all $\overline{x} \in C\cap \dom F$. Moreover, for  case \ref{quad}, if $C$ is further assumed to be a polyhedral set, then as shown in \cite{Li2017} the KL exponent for $G$ is $\frac{1}{2}$, and by Theorem~\ref{theorem:global2}, the proposed algorithm exhibits a linear convergence rate.
\end{remark}

\section{Case studies}
\label{sec:casestudy}

In this section, we provide the numerical results of our proposed algorithm for two case studies: compressed sensing with $L_1-L_2$ regularization, and optimal power flow problem which considers photovoltaic systems placement for a low voltage network. All of the experiments are performed in MATLAB R2021b on a 64-bit laptop with Intel(R) Core(TM) i7-1165G7 CPU (2.80GHz) and 16GB of RAM.

\subsection{Compressed sensing with $L_1-L_2$ regularization}\label{casestudy:case1}

We consider the compressed sensing problem
\begin{align}\label{eq:prob41}
    \min_{x\in\mathbb{R}^d}   \left( \varphi(Ax)+\gamma(\|x\|_1-\alpha\|x\|) \right),
\end{align}
where $A\in\mathbb{R}^{m\times d}$ is an underdetermined sensing matrix of full row rank, $\gamma \in \mathbb{R}_{++}$, and $\alpha \in \mathbb{R}_{++}$. Here, $\varphi$ can be the least square loss function and the Lorentzian norm loss function mentioned in Remark~\ref{remark:KLexponent}.

In our numerical experiments, we let $\alpha=1$ to be consistent with the setting in \cite{Lou2017}. We first start with the least square loss function. By letting $\varphi(z)=\frac{1}{2}\|z-b\|^2$, where $b\in\mathbb{R}^m \setminus \{0\}$, the problem \eqref{eq:prob41} now becomes
\begin{equation}\label{eq:L1L2}
\min_{x\in\mathbb{R}^d} \left( \frac{1}{2}\|Ax-b\|^2+\gamma(\|x\|_1-\|x\|) \right).
\end{equation}
This is known as the regularized least square problem, which has many applications in signal and image processing \cite{Nikolova_2005, Kim_2007,Lou2017}. To solve problem \eqref{eq:L1L2}, we use Algorithm~\ref{algo:extrapolated} with $f =\gamma\|\cdot\|_1$, $h =\varphi$, and $g =\gamma\|\cdot\|$. Then the update of $x_{n+1}$ in Step~2 of Algorithm~\ref{algo:extrapolated} reads as 
\begin{align*}
x_{n+1} = \argmin_{x\in \mathbb{R}^d} (\gamma\|x\|_1 +\frac{1}{2\tau_n}\|x-w_n\|^2) = \prox_{\gamma\tau_n\|\cdot\|_1} (w_n),
\end{align*}
where $w_n =v_n-\tau_n A^*(Au_n-b)+\gamma\tau_n g_n$, and where $g_n\in \partial_L \|\cdot\|(x_n)$ is given by
\begin{equation*}
g_n =\begin{cases}
0 & \text{if $x_n =0$},\\
\frac{x_n}{\|x_n\|} & \text{if $x_n \neq 0$}.
\end{cases}
\end{equation*}
In this case, the proximal operator is the \emph{soft shrinkage} operator \cite{Beck2009}, and so, for all $i=1,\dots,d$,
\begin{equation*}
(x_{n+1})_i =\operatorname{sign}((w_n)_i) \max\{0, \lvert(w_n)_i\rvert-\gamma\tau_n\}.
\end{equation*}
For this test case, we compare our proposed Algorithm~\ref{algo:extrapolated} with the following algorithms:
\begin{itemize}
    \item Alternating direction method of multipliers (ADMM) proposed in \cite{Lou2017}; 
    \item Generalized proximal point algorithm (GPPA) proposed in \cite{An2016}; 
    \item Proximal difference-of-convex algorithm with extrapolation (pDCAe) in \cite{Wen2017}.  
\end{itemize}
Note that the ADMM algorithm uses the $L_1-L_2$ proximal operator which was first proposed in \cite{Lou2017}. For ADMM,  we have $f(x) =\gamma\|x\|_1-\gamma\|x\|$, $h(x) =\varphi(Ax)$, and $g \equiv 0$. For GPPA and pDCAe, we let $f(x) =\gamma\|x\|_1$, $h(x) =\varphi(Ax)$, and $g(x) =\gamma\|x\|$. The parameters of ADMM and pDCA are derived from \cite{Lou2017,Wen2017}. The step size for GPPA and pDCAe are $0.8/\lambda_{\max}(A^TA)$ and $1/\lambda_{\max}(A^TA)$, respectively, where $\lambda_{\max}(M)$ is the maximum eigenvalue of a symmetric matrix $M$. We set $\gamma=0.1$ and run all algorithms, initialized at the origin, for a maximum of 3000 iterations. Note that $\beta =0$ (since $g$ is convex) and $\ell =1$ (since $\nabla \varphi(z) =z-b$). For our proposed algorithm, $\delta=5\times10^{-25}$, $\overline{\lambda}=0.1$, $\overline{\mu}=0.01$, 
$\tau_n=1/\left(2\delta +\ell\|A\|^2\left(2\overline{\lambda}+1\right) +2\overline{\mu}\right)$ with $\|A\|$ being spectral norm, and
\begin{align*}
\lambda_n=\overline{\lambda}\frac{\kappa_{n-1}-1}{\kappa_n},\ \mu_n=\overline{\mu}\tau_n\frac{\kappa_{n-1}-1}{\kappa_n},
\end{align*}
where $\kappa_{-1}=\kappa_0=1$ and $\kappa_{n+1}=\frac{1+\sqrt{1+4\kappa_n^2}}{2}$. Here, we adopt the well-known restarting techniques (see, for example, \cite[Chapter~10]{Beck2017}) and reset $\kappa_{n-1}=\kappa_n=1$ every 50 iterations. Note that this technique has been utilized in several existing work such as \cite{Bo2021,Wen2017}. We generate the vector $b$ based on the same method as in \cite{Lou2017}. In generating the matrix $A$,  we use both randomly generated Gaussian matrices and discrete cosine transform (DCT) matrices. For each cases, we consider different matrix sizes of $m \times d$ with sparsity level $s$ as given in Table~\ref{tab:casestudy1}. For the ground truth sparse vector $x_g$, a random index set is generated and non-zero elements are drawn following the standard normal distribution. The stopping condition for all algorithms is $\frac{\|x_{n+1}-x_n\|}{\|x_n\|}<10^{-8}$. 

\begin{table}[h]
\caption{Test cases for Case study~\ref{casestudy:case1}}
\label{tab:casestudy1}
\begin{center}
\footnotesize
\begin{tabular}{l|c|rrr}
\hline
      Matrix type   & Case & $m$ & $d$  &$s$ \\ \hline
         & 1    & 180 & 640  & 20  \\
Gaussian & 2    & 360 & 1280 & 40  \\
         & 3    & 720 & 2560 & 80  \\
         & 4 & 2880 & 10240 & 320 \\\hline
         & 5    & 180 & 640  & 20  \\
DCT      & 6    & 360 & 1280 & 40  \\
         & 7    & 720 & 2560 & 80  \\
         & 8    & 2880 & 10240 & 320  \\\hline
\end{tabular}
\end{center}
\end{table}

In Table~\ref{tab:resultcase1}, we report the CPU time, the number of iteration, and the function values at termination, the error to the ground truth at termination, averaged over 30 random instances. It can be observed that since the Step~2 involves the calculation of matrix multiplication, the CPU time is significantly increased with the increasing dimension of the matrices. In addition, in terms of running time, objective function values, the number of iterations used, and the error with respect to the ground truth solution (defined as $\frac{\|x_{n+1}-x_g\|}{\|x_g\|}$), our proposed algorithm outperforms ADMM and GPPA in all test cases. Our algorithm also appears to be comparable to pDCAe. Note that our algorithm can be applied to a more general framework than the others.

\begin{table}[h]
\caption{Results of 30 random generated instances for 8 test cases - Least square loss function}
\label{tab:resultcase1}
\begin{center}
\setlength{\tabcolsep}{1pt}
\resizebox{\columnwidth}{!}{
\begin{tabular}{c|cccc|cccc|cccc}
\hline
\multicolumn{1}{l|}{} & \multicolumn{4}{c|}{CPU time (seconds)} & \multicolumn{4}{c|}{Iteration}  & \multicolumn{4}{c}{Error vs ground truth}     \\ \hline
Case                 & ADMM    & GPPA     & pDCAe  & Proposed  & ADMM & GPPA  & pDCAe & Proposed & ADMM      & GPPA      & pDCAe     & Proposed  \\ \hline
1	&0.15&	\textbf{0.02}&	\textbf{0.02}&	\textbf{0.02}&	1803&	487&	\textbf{274}&	406&	5.739E-04&	3.702E-07&	3.505E-07&	\textbf{2.987E-07}\\
2&	0.42&	0.15&	\textbf{0.12}&	0.14&	1583&	449&	\textbf{292}&	325&	2.268E-04&	3.340E-07&	\textbf{1.095E-07}&	2.316E-07\\
3&	2.92&	1.20&	0.87&	\textbf{0.84}&	1471&	417&	300&	\textbf{298}&	2.059E-04&	3.039E-07&	\textbf{6.742E-08}&	2.132E-07\\
4&	61.06&	17.77&	15.54&	\textbf{14.13}&	1415&	380&	311&	\textbf{279}&	1.893E-04&	2.717E-07&	\textbf{4.937E-08}&	1.913E-07\\
5&	0.05&	\textbf{0.01}&	\textbf{0.01}&	\textbf{0.01}&	612&	157&	121&	\textbf{112}&	7.222E-05&	9.690E-08&	8.409E-08&	\textbf{8.026E-08}\\
6&	0.17&	0.07&	\textbf{0.06}&	\textbf{0.06}&	627&	186&	128&	\textbf{112}&	7.095E-05&	2.345E-05&	\textbf{5.180E-08}&	6.383E-08\\
7&	1.19&	0.50&	0.36&	\textbf{0.31}&	634&	170&	131&	\textbf{113}&	7.116E-05&	1.777E-06&	\textbf{4.240E-08}&	6.303E-08\\
8&	29.92&	8.11&	7.49&	\textbf{6.47}&	721&	181&	155&	\textbf{133}&	6.908E-05&	3.984E-05&	\textbf{2.742E-08}&	6.092E-08\\
 \hline
\end{tabular}
}
\end{center}
\end{table}

Next, we consider the case of Lorentzian norm loss function by letting $\varphi(z)=\|z-b\|_{LL_{2,1}}$. Lorentzian norm can be useful in robust sparse signal reconstruction \cite{Lorentian}. In this case, the optimization problem \eqref{eq:prob41} becomes
\begin{align}\label{eq:L1L2_Lorentzian}
    \min_{x\in\mathbb{R}^d}   \left( \|Ax-b\|_{LL_{2,1}}+\gamma(\|x\|_1-\|x\|) \right).
\end{align}
We note that 
\begin{align*}
    \nabla \varphi(z)=\left(\frac{2(z_1-b_1)}{1+ \lvert z_1-b_1 \rvert ^2},\dots,\frac{2(z_m-b_m)}{1+\lvert z_m-b_m \rvert^2}\right)^T. 
\end{align*}
is Lipschitz continuous with modulus $\ell =2$. Since the loss function is now nonconvex and the pDCAe algorithm in \cite{Wen2017} requires a convex loss function, pDCAe is not applicable in this case. Moreover, the ADMM algorithm in \cite{Lou2017} is also not directly applicable due to the presence of the Lorentzian norm. Therefore, we compare our method with the GPPA only.  For GPPA, we let $h(x)=\varphi(Ax)$. The stepsize for GPPA is $\tau=0.8/(2\lambda_{\max}(A^TA))$. For this case, we set $\gamma=0.001$ and run the GPPA and our proposed algorithm, which are both initialized at the origin, for a maximum of 4000 iterations. The remaining parameters of our algorithm are set to the same values as before. We also use 30 random instances of the previous 8 test cases. The results are presented in Table~\ref{tab:Lorenzt}. It can be seen from Table~\ref{tab:Lorenzt} that the proposed algorithm outperforms GPPA in this case.

\begin{table}[h]
\caption{Results of 30 random generated instances for 8 test cases - Lorentzian norm loss function}
\label{tab:Lorenzt}
\footnotesize
\centering
\setlength{\tabcolsep}{4pt}
\begin{tabular}{c|cc|cc|cc} 
\hline
\multicolumn{1}{l|}{} & \multicolumn{2}{c|}{CPU time (seconds)} & \multicolumn{2}{c|}{Iteration} & \multicolumn{2}{c}{Error vs ground truth}  \\ 
\hline
Case                 & GPPA   & Proposed                      & GPPA & Proposed               & GPPA   & Proposed                          \\
\hline
1	&0.36	&\textbf{0.30}	&2104	&\textbf{1720}		&2.865E-03&	\textbf{2.863E-03}\\
2	&2.94&	\textbf{2.44}&	2282&	\textbf{1870}		&4.043E-03&	\textbf{3.132E-03}\\
3	&17.15&	\textbf{14.06}&	2369&	\textbf{1936}		&3.168E-03&	\textbf{3.166E-03}\\
4	&277.46	&\textbf{225.60}&	2438&	\textbf{1993}	&3.356E-03&	\textbf{3.354E-03}\\
5	&0.36	&\textbf{0.30}	&2148	&\textbf{1765}	&1.425E-03	&\textbf{1.416E-03}\\
6	&3.00	&\textbf{2.47}	&2347	&\textbf{1922}	&2.134E-03	&\textbf{1.169E-03}\\
7	&16.85	&\textbf{13.59}	&2334	&\textbf{1908}	&1.213E-03	&\textbf{1.205E-03}\\
8	&260.61	&\textbf{220.55}	&2440	&\textbf{2064}	&2.269E-03	&\textbf{2.261E-03}\\
\hline
\end{tabular}
\end{table}

\subsection{Optimal power flow considering photovoltaic systems placement}\label{subsec:dcopf}

Optimal power flow (OPF) is a well-known problem in power system engineering \cite{Abdi2017}. The integration of many distributed energy resources (DERs) such as photovoltaic systems, has become increasingly popular in modern smart grid \cite{Wankhede2020}, leading to the needs of developing more complicated OPF models considering the DERs. Metaheuristic algorithms are popular in solving OPF, and they have also been applied to solve the OPF with DERs integration \cite{Shaheen2019, Khaled2017}. However, the drawbacks of the metaheuristic algorithms are that the convergence proof cannot be established, and their performances are not consistent \cite{Ezugwu2019}. Difference-of-convex programming has also been successfully applied to solve the OPF problem in \cite{Merkli2018}, although DERs are not considered. Motivated by the aforementioned results, in this work we try to applied our proposed algorithm to solve the OPF in a low voltage network, which includes optimizing the placement of photovoltaic (PV) systems. We formulate two models which are based on the Direct Current OPF (DC OPF) \cite{Kargarian2018}, and Alternating Current OPF (AC OPF) \cite{Farivar2013}. To the best of the authors' knowledge, this is the first time a proximal algorithm is used to solve an DER-integrated OPF with a difference-of-convex formulation, considering PV systems placement. The objective function aims at minimizing the cost of the conventional generator, which is a diesel generator in this case study, while maximizing the PV-penetration, which is defined as the ratio of the power generated by the PV systems divided by the total demand. The network considered in this case study is illustrated in Figure~\ref{fig:F1}, which consists of 14 buses. This case study is taken from a real low voltage network in Victoria, Australia. Currently, there are demands at bus 1, 3, 4, 6, 8, 9, 13, and 14. There are 6 PV systems at bus 1, 2, 4, 5, 7, and 8 with a capacity of 800 kW. A 5000 kW diesel generator is connected to bus 11. All of the parameters and decision variables in this case study are presented in Table~\ref{tab:paramsvars}. The cost of the current situation (before optimization is performed) is based on the cost of active power withdrawn from the generator, plus the installation cost of the PV systems. To determine this initial cost, the amount of active power generated by the generator is determined via DIgSILENT Power Factory 2021. After that, the cost of active power is calculated by the expression $\sum_{i\in M}(a(P_{i}^{G})^2+bP_{i}^{G}+c)$, plus the installation cost of the six PV systems.

We first formulate the OPF problem with PV, which is based on the DC OPF, as follows
\begin{subequations}
\begin{align}
    \min~~ & \left( \sum_{i\in N}CX_i + \sum_{i\in M}\left(a(P_{i}^{G})^2+bP_{i}^{G}+c\right) - \frac{\sum_{i\in N} P^{PV}_i}{\sum_{i\in N} D_i}\right)\label{eq:objOPF0}\\
    \text{subject to~~} & P_{ij}=b_{ij}(\theta_i-\theta_j), \quad \forall i,j \in N\label{eq:OPFflow}\\
    & \theta_{11}=0\label{eq:slackbus}\\   
    &\sum_{j\in N, j \neq i}P_{ij}=P_{i}^{PV}+P_{i}^{G}-D_i, \quad \forall i \in M \label{eq:flowinout}\\
    &\sum_{j\in N, j \neq i}P_{ij}=P_{i}^{PV}-D_i, \quad \forall i \in N \setminus M \label{eq:flowinout2}\\
    &\frac{\sum_{i\in N}P_i^{PV}}{\sum_{i\in N}D_i}\geq 0.5 \label{eq:penrate}\\
    &\lvert P_{ij} \rvert\leq \overline{P}, \quad \forall i,j \in N \label{eq:thermal}\\
     &0\leq P_{i}^{PV}\leq X_i\overline{P^{PV}}, \quad \forall i \in N \label{eq:existX}\\
     &0\leq P_{i}^{G}\leq \overline{P^G}, \quad \theta_i \in [0,2\pi], \quad \forall i \in M \label{eq:bounds}\\
     &X_i \in \{0,1\}, \quad \forall i \in N.
\end{align}
\end{subequations}
We see that for any $i \in N$, if $X_i\in [0,1]$, then $X_i - X_i^2 = X_i(1-X_i) \geq 0$. Therefore,
\begin{align*}
    &(\forall i\in \{1,\dots, N\},\quad X_i \in \{0,1\}) \\
    &\iff (\forall i\in \{1,\dots, N\},\ X_i \in [0,1] \text{~and~} \sum_{i\in N} \left(X_i^2-X_i \right)\geq 0).
\end{align*}
Taking into account of the above equivalence, a plausible alternative optimization model for the OPF problem with PV is as follows
\begin{subequations}
\begin{align}
    \min~~ & \left(\sum_{i\in N}CX_i + \sum_{i\in M}\left(a(P_{i}^{G})^2+bP_{i}^{G}+c\right) - \frac{\sum_{i\in N} P^{PV}_i}{\sum_{i\in N} D_i} \notag \right.\\
    & \left.-\gamma \sum_{i\in N}(X_i^2 - X_i) \right)\label{eq:objOPF1}\\
    \text{subject to~~} &\eqref{eq:OPFflow} \to \eqref{eq:bounds}\\
        & X_i \in [0,1], \quad \forall i \in N.
\end{align}
\end{subequations}
The objective function~\eqref{eq:objOPF1} aims at minimizing the installation cost and the generation cost of the diesel generator and maximizing the PV penetration, which is defined as $\frac{\sum_{i\in N} P^{PV}_i}{\sum_{i\in N} D_i}$ \cite{Hoke2013}, the parameter $\gamma >0$ which serves as a Lagrangian multiplier for the discrete constraints $X_i \in \{0,1\}$. 

With this reformulation, the objective function \eqref{eq:objOPF1} now becomes a difference-of-convex function. Constraint \eqref{eq:OPFflow} describes the relationship between the power flow from one bus to another and their corresponding phasor angles, constraint \eqref{eq:slackbus} defines the voltage angle at the slack bus, which is the bus connected to the diesel generator, constraint \eqref{eq:flowinout} and \eqref{eq:flowinout2} define the power flow in and out of any buses, constraint \eqref{eq:penrate} ensures that the PV penetration rate is at least 50 percent, constraint \eqref{eq:thermal} defines the transmission limits of the transmission lines, and constraint \eqref{eq:existX} makes sure that the solar power only exists at a bus when there is a PV system at that bus. Finally, constraint \eqref{eq:bounds} defines the boundaries of the remaining decision variables. All of the constraints form the feasible set $S$. This problem takes the form of \eqref{eq:P} with $f =\iota_S$, $h =\sum_{i\in N}CX_i + \sum_{i\in M}\left(a(P_{i}^{G})^2+bP_{i}^{G}+c\right)  - \frac{\sum_{i\in N} P^{PV}_i}{\sum_{i\in N} D_i}$, and $g =\gamma \sum_{i \in N}(X_i^2 - X_i)$. By Remark~\ref{remark:KLexponent}\ref{quad} and Theorem~\ref{theorem:global2}, in this case the proposed algorithm converges with a linear rate. The update of $x_{n+1}$ in Algorithm~\ref{algo:extrapolated} becomes
\begin{align*}
    x_{n+1} = \argmin_{x\in S} \|x-v_n+\tau_n\nabla h(u_n)-\tau_n \nabla g(x_n)\|^2.
\end{align*}
Here,
\begin{align*}
x &= [P^{PV}_1,\dots,P^{PV}_{14}, P^G_{11}, X_1,\dots,X_{14}, \theta_1,\dots,\theta_{14},\\ 
&\qquad P_{1,1},\dots,P_{1,14}, \dots,P_{14,1},\dots,P_{14,14}]^T.
\end{align*}
This step is solved by MATLAB's \texttt{quadprog} command. Noting that $\beta =0$ (since $g$ is convex) and $\ell =2a$, the parameters are set as follows: $\delta=5\times10^{-25}$, $\overline{\lambda}=0.1$, $\overline{\mu}=0.01$, $\tau_n=1/\left(2\delta +\ell\left(2\overline{\lambda}+1\right) +2\overline{\mu}\right)$, $\mu_n=\overline{\mu}\tau_n$, and $\lambda_n$ is chosen in the same way as in Section~\ref{casestudy:case1}. The performance of the proposed algorithm is compared with the the GPPA, and the pDCAe, as illustrated in Table~\ref{tab:compareDCOPF}. We use the step size $\tau_n=0.8/\ell$ for GPPA, and $\tau_n =1/\ell$ for pDCAe. The maximum number of iteration is 1000, and the stopping condition is the same as the one used in Section~\ref{casestudy:case1}. 

We test all algorithms for 30 times, at each time we use a random starting point between the upper bound and the lower bound of the variables. The mean objective function values, and the best objective function values found by all algorithms are reported in Table~\ref{tab:compareDCOPF}. Although the proposed algorithm, on average, needs more iterations than the remaining ones, it can find a better solution. The mean objective function value found by our algorithm is also better than the ones found by the other algorithms. Our algorithm is also comparable to the GPPA and the pDCAe in terms of average CPU time.

\begin{table}[h]
\centering
\caption{Comparison of GPPA, pDCAe, and the proposed algorithm on 30 runs of the DC OPF model}
\label{tab:compareDCOPF}
\footnotesize
\begin{tabular}{c|ccc} 
\hline
Algorithm     & GPPA    & pDCAe   & Proposed  \\ 
\hline
Mean objective function value  & 3.724581 & 3.719692 & \textbf{3.706267}   \\
Best objective function value & 1.920925 & 1.920924 & \textbf{1.920922}   \\
Mean iteration number               & \textbf{3}       & 4       & 5        \\
Mean CPU time (seconds)          & \textbf{0.08}    & 0.11    & 0.12      \\
\hline
\end{tabular}
\end{table}
The details of the best solution found by our algorithm are shown in Figure~\ref{fig:F1}.

\begin{figure}[H] 
\centering
\includegraphics[width=0.5\columnwidth]{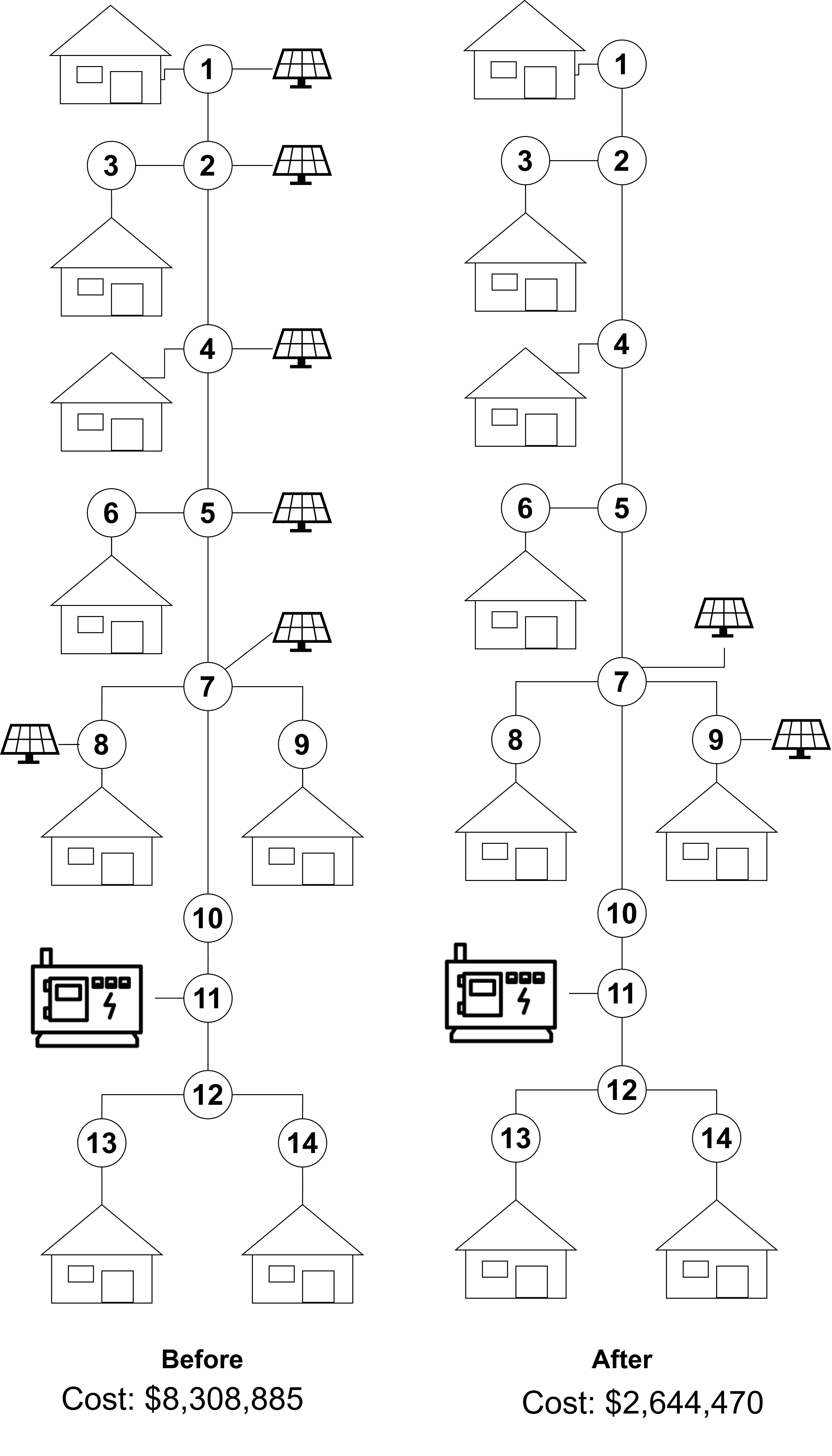}
\caption{Best solution found in Case study~\ref{subsec:dcopf}, together with the total cost.}
\label{fig:F1}
\end{figure}

Now we consider the case of AC OPF model. The formulation is based on the \emph{branch flow model} given in \cite{Farivar2013}. Firstly, the network is treated as a directed graph, as shown in Figure~\ref{fig:tree}. 

\begin{figure}[H]
\centering
\includegraphics[width=0.3\columnwidth]{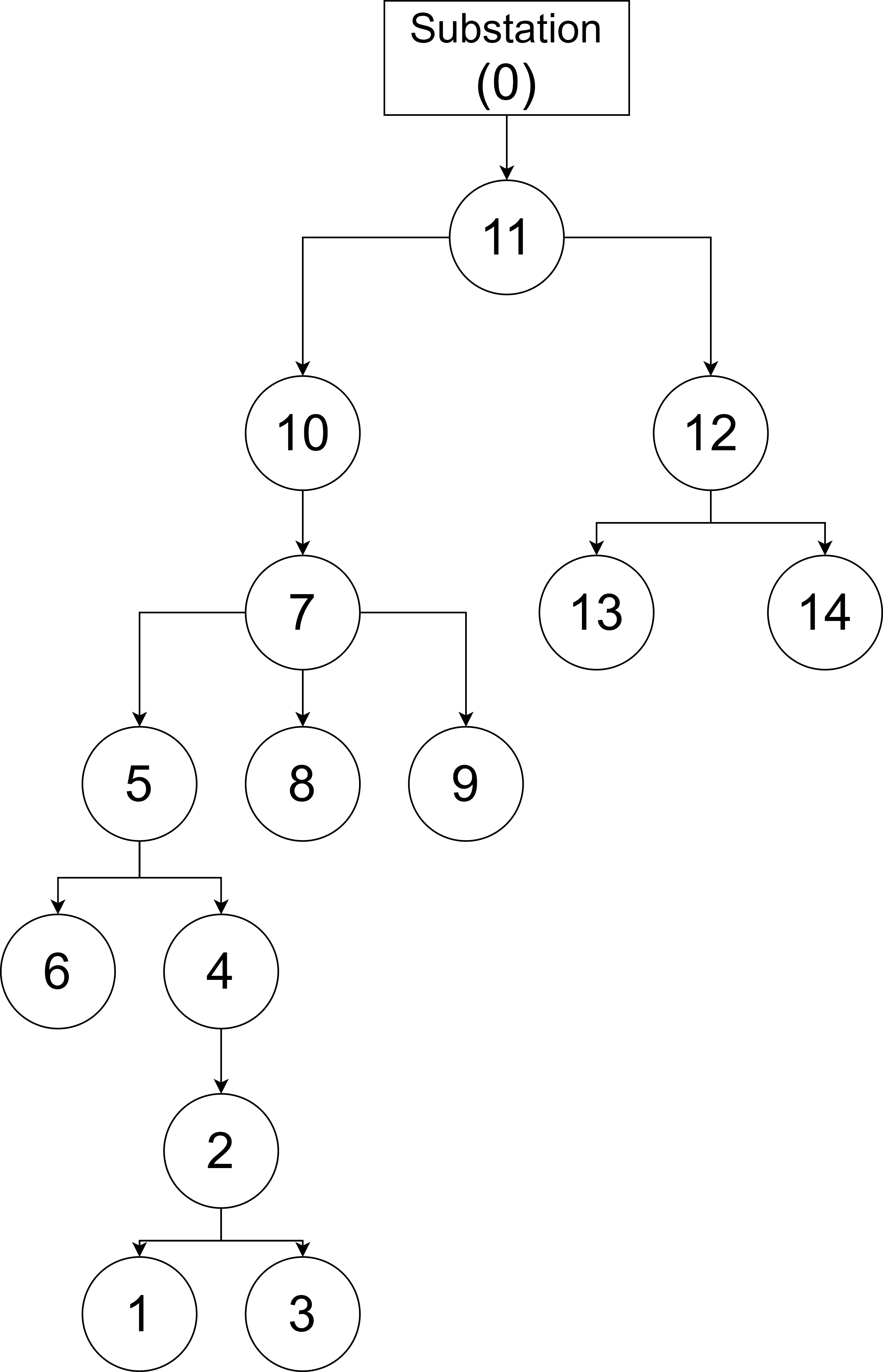}
\caption{Directed graph representation of the network.}
\label{fig:tree}
\end{figure}

We denote a directed link by $(i,j)$ or $i\to j$ if it points from bus $i$ to bus $j$, and the set of all directed links by $E$. Next, the formulation is given as follows,

\begin{subequations}
\begin{align}
    \min~ & \left( \sum_{i\in N}CX_i + \sum_{i\in M}\left(a(P_{i}^{G})^2+bP_{i}^{G}+c\right) - \frac{\sum_{i\in N} P^{PV}_i}{\sum_{i\in N} D_i} \notag \right. \\
    &\qquad \left. -\gamma \sum_{i\in N}(X_i^2 - X_i) \right)\label{eq:objACOPF}\\
    \text{s.t.~~}  &\hat{I}_{ij}= \lvert I_{ij} \rvert ^2, \quad \forall(i,j) \in E\\
    & v_{i}=\lvert V_{i} \rvert ^2,\quad \forall i \in N\\
    & P_{0,11}=Q_{0,11}=0\\
    &P_{ij}+P^{PV}_{j}+P^G_{j}-D_j=\sum_{k\in N: j\to k}P_{jk},~\forall (i,j) \in E,~j \in M \label{con:acflow1}\\
    &Q_{ij}+Q^{PV}_{j}+Q^G_{j}-D^Q_j=\sum_{k\in N: j\to k}Q_{jk},~\forall (i,j) \in E,~j \in M \\
    &P_{ij}+P^{PV}_{j}-r_{ij}\hat{I}_{ij}-D_j=\sum_{k \in N: j\to k}P_{jk},~\forall (i,j) \in E,~j\notin M \\
    &Q_{ij}+Q^{PV}_{j}-\mathcal{X}_{ij}\hat{I}_{ij}-D^Q_j=\sum_{k \in N: j\to k}Q_{jk},~\forall (i,j) \in E,~j \notin M \label{con:acflow2}\\
    &\frac{\sum_{j\in N}P_j^{PV}}{\sum_{j\in N}D_j}\geq 0.5 \label{con:penrateac}\\
    & 0\leq P^{PV}_j \leq X_j \overline{P^{PV}}, \quad \forall j \in N \label{con:acpv1}\\
    & 0 \leq Q^{PV}_j \leq X_j \overline{Q^{PV}}, \quad \forall j \in N \label{con:acpv2}\\
    &v_j=v_i-2(r_{ij}P_{ij}+\mathcal{X}_{ij}Q_{ij})+(r_{ij}^2+\mathcal{X}_{ij}^2)\hat{I}_{ij}, ~\forall (i,j) \in E \label{con:acvrelation}\\
    & \hat{I}_{ij}v_i= P_{ij}^2+Q_{ij}^2, \quad \forall (i,j) \in E \label{con:physicalmean}\\
    & \underline{V}^2 \leq v_i \leq \overline{V}^2, \quad \forall i \in N \label{con:bound1}\\
     & \underline{I}^2 \leq \hat{I}_{ij} \leq \overline{I}^2, \quad \forall (i,j) \in E\\
    &\lvert P_{ij} \rvert\leq \overline{P}, \quad \forall i,j \in E\\
    &\lvert Q_{ij} \rvert \leq \overline{Q}, \quad \forall i,j \in E\\
    &0\leq P_{j}^{G}\leq \overline{P^G}, \quad \forall j \in M\\
     &0\leq Q_{j}^{G}\leq \overline{Q^G}, \quad \forall j \in M\\
    &X_j \in [0,1], \quad \forall j \in N \label{con:bound2}
\end{align}
\end{subequations}

The main differences between the AC OPF model and the DC OPF model are that the AC OPF model has a nonconvex feasible set, and that it also accounts for the loss in the network as well as the reactive power. Consequently, AC OPF is more accurate than DC OPF in practice \cite{Frank2016}, and due to its nonconvexity, it is also more challenging to solve \cite{Low2014}. Constraints \eqref{con:acflow1}~$\to$~\eqref{con:acflow2} define the power flow in any directed links. Constraint \eqref{con:penrateac} ensures that the PV penetration rate is at least 50 percent. Constraints \eqref{con:acpv1} and \eqref{con:acpv2} ensure that the active and reactive power from PV systems only exist at a bus if and only if there is a PV system at that bus. Constraint \eqref{con:acvrelation} describes the relationship between the voltage of any two bus in a directed link. Constraint \eqref{con:physicalmean} is a nonconvex constraint ensuring that the solution have physical meaning. Finally, constraints \eqref{con:bound1}~$\to$~\eqref{con:bound2} define the boundaries of the decision variables. The update of $x_{n+1}$ is also the same as before. For this case,
\begin{align*}
      x =[&P^G_{11},Q^G_{11}, v_1, \dots, v_{14},\hat{I}_{0,11}, \dots,  \hat{I}_{2,1},  P_{0,11}, \dots,  P_{2,1},
      Q_{0,11}, \dots, Q_{2,1}, \\
      &P^{PV}_{1}, \dots, P^{PV}_{14},  Q^{PV}_{1}, \dots,Q^{PV}_{14}, X_{1}, \dots, X_{14}  ]^T.
\end{align*}

We also perform the same numerical experiment as in the DC OPF case. However, the pDCAe is not applicable in this case, so we compare our algorithm with the GPPA only. The parameters of GPPA and our proposed algorithm are set to the same values as those used for the DC OPF model. Due to the nonconvex constraint, MATLAB's \texttt{fmincon} is used to solve the subproblem in Step~2 instead of \texttt{quadprog}. The results are shown in Table~\ref{tab:compareACOPF}.

\begin{table}[h]
\centering
\caption{Comparison of GPPA and the proposed algorithm on 30 runs of the AC OPF model}
\label{tab:compareACOPF}
\footnotesize
\begin{tabular}{c|cc} 
\hline
Algorithm    & GPPA   & Proposed  \\ 
\hline
Mean objective function value 	&3.492971&	\textbf{3.416897}
   \\
Best objective function value  & 1.920924 & \textbf{1.920923}   \\
Mean iteration number                & 33      & \textbf{20}       \\
Mean CPU time (seconds)          & 152.69    & \textbf{109.20}    \\
\hline
\end{tabular}
\end{table}

Table~\ref{tab:compareACOPF} shows that our proposed algorithm takes less time and fewer iterations than the GPPA to converge. The best solution found by our algorithm in this case is also the same as the one found in the DC OPF model.

It can be seen that for both DC OPF and AC OPF, two PV systems need to be installed at bus 7 and bus 9, the remaining demands can be supplied by the generator, and the demands are satisfied by the power flows. Although the mathematical model aims at maximizing the PV penetration, drawing power from the diesel generator is still more economical due to the high installation cost of the PV systems. The solution significantly reduces the cost by approximately $70\%$ from the original situation. This can serve as a proof of concept for future research.

\section{Conclusion}\label{sec:conclusion}

We have proposed an extrapolated proximal subgradient algorithm for minimizing a class of structured nonconvex and nonsmooth optimization problems. Our algorithm allows less restriction on the smoothness and convexity requirements for establishing convergence proof. In addition, our choice of the extrapolation parameters is flexible enough to cover the popular one used in FISTA and its variants. The convergence of the whole sequence generated by our algorithm is proved via the abstract convergence framework given in \cite{Bo2021}. The proposed algorithm exhibits very competitive results in terms of numerical experiments which are performed on a compressed sensing problem with nonconvex $L_1-L_2$ regularization, compared with some existing algorithms. We have also applied this algorithm to solve an OPF problem considering PV placement, which serves as a proof of concept for future works.

\subsection*{Acknowledgements}

The research of TNP was supported by Henry Sutton PhD Scholarship Program from Federation University Australia. The research of MND benefited from the FMJH Program Gaspard Monge for optimization and operations research and their interactions with data science, and was supported by a public grant as part of the Investissement d'avenir project, reference ANR-11-LABX-0056-LMH, LabEx LMH. The research of GL was supported by Discovery Project 190100555 from the Australian Research Council.

\appendix
\section{Data of Case study~\ref{subsec:dcopf}}\label{secA1}

In Case study~\ref{subsec:dcopf}, we use a base power of 100 MVA, and a base voltage of 22 kV. All of the parameters are converted into Per Unit (pu) values in the calculation. Readers can refer to \cite[Chapter~2]{Weedy2012} for a detailed tutorial on the Per Unit system. The active and reactive power demand are given in Table~\ref{table:Demand}. The other technical parameters of the system including susceptance, resistance, and reactance of the lines are given in Table~\ref{table:susceptance}, Table~\ref{table:resistance}, and Table~\ref{table:reactance}, respectively.
\begin{table}[H]
\centering
\caption{Parameters and variables of Case study~\ref{subsec:dcopf}}
\label{tab:paramsvars}
\footnotesize
\begin{tabular}{l| p{0.45\columnwidth} |p{0.27\columnwidth}}
\hline
\multicolumn{1}{c|}{Parameters}          & \multicolumn{1}{|c}{Description}                                                                                                                                      & \multicolumn{1}{|c}{Values}                                                                                                    \\ \hline
$N$                 & Set of buses                                                                                                                                                          & $\{1,2,\dots,14\}$                                                                                          \\
$M$                 & Set of buses that are connected to diesel generators, $M\subseteq N$                                                                                                  & $\{11\}$                                                                                                  \\
$E$ & Set of directed links & 
  
    $\{(0,11),(11,10),\dots,
    $\\
    & & $(2,1)\}$\\
$D_i$               & Active power demand at bus $i$                                                                                                                                                     & See Table~\ref{table:Demand}                                                                        \\
$D^Q_i$               & Reactive power demand at bus $i$                                                                                                                                                     & See Table~\ref{table:Demand}                                                                        \\

$b_{ij}$            & Susceptance value of the line connecting bus $i$ and bus $j$                                                                                                          & See Table~\ref{table:susceptance}                                                                                          \\
$r_{ij}$            & Resistance value of the line connecting bus $i$ and bus $j$                                                                                                          & See Table~\ref{table:resistance}                                                                                          \\
$\mathcal{X}_{ij}$            & Reactance value of the line connecting bus $i$ and bus $j$                                                                                                          & See Table~\ref{table:reactance}                                                                                          \\
$C$                 & Unit installation cost of a PV at bus $i$                                                                                                                                  & 1 (1 unit = \$1040000)                                                                                                  \\
$a,b,c$             & Coefficients associated with the cost of diesel generator. These coefficients for a diesel generator are derived from \cite{Kusakana2015,Fodhil2019} & $0.246$, $0.084$,  $0.433$ \\
$\overline{P^{PV}}$ & Active power capacity of PVs                                                                                                                                                 & 800 kW (0.008 pu)                                                                                                    \\
$\overline{Q^{PV}}$ & Reactive power capacity of PVs                                                                                                                                                 & 300 kW (0.003 pu)                                                                                                    \\
$\overline{P^G}$    & Active power capacity of diesel generator                                                                                                                                          & 5000 kW (0.05 pu)                                                                                                   \\
$\overline{Q^G}$    & Reactive power capacity of diesel generator                                                                                                                                          & 3000 kW (0.03 pu)                               \\
$\overline{P},\overline{Q}$ & Transmission limits of lines                                                                                                                                          & 3000 kW (0.03 pu)     \\

$\overline{V}, \underline{V} $ & Voltage limits                                                                                                                                          & 1.05 pu, 0.95 pu     \\
$\overline{I}, \underline{I} $ & Current limits                                                                                                                                          & 2 pu , 0 pu     \\
$\gamma$ & Relaxation parameter & 1
\\ \hline
\multicolumn{1}{c|}{Variables}           &                                                                                                                                                                       &                                                                                                           \\ \hline
$P_{i}^{PV}$        & Active power generated by a PV system at bus $i$, $i\in N$ &   \\

$Q_{i}^{PV}$        & Reactive power generated by a PV system at bus $i$, $i\in N$ &   \\

$P_{i}^{G}$         & Active power generated by diesel generator at bus $i$, $i \in M$                                                                                                                        &                                                                                                           \\
$Q_{i}^{G}$         & Reactive power generated by diesel generator at bus $i$, $i \in M$                                                                                                                        &                                                                                                           \\
$X_i $            & 1 if there is a PV system needed at bus $i$, and 0 otherwise, $i\in N$                                                                                                           &                                                                                                           \\
$V_i $            & Nodal voltage of bus $i$, $i\in N$                                                                                                           &                                                                                                           \\
$I_{ij} $            & Current between bus $i$ and bus $j$, \quad $i, j\in N$, \quad $i \neq j$                                                                                                           &                                                                                                           \\
$\theta_i$          & Voltage angle of bus $i$, $i\in N$                                                                                                                                              &                                                                                                           \\
$P_{ij}$            & Active power flow between bus $i$ and bus $j$, \quad $i, j\in N$, \quad $i \neq j$                                                                                                                                &                                                                                                           \\
$Q_{ij}$            & Reactive power flow between bus $i$ and bus $j$, \quad $i, j\in N$, \quad $i \neq j$                                                                                                                                &                                                                                                           \\
\hline
\end{tabular}
\end{table}

\begin{sidewaystable}

\centering
\caption{Active and Reactive power demand}\label{table:Demand}
\footnotesize
\setlength{\tabcolsep}{4pt}
\begin{tabular}{p{0.05\columnwidth}cccccccccccccc} 
\hline
Bus    & 1     & 2   & 3     & 4     & 5   & 6     & 7   & 8     & 9     & 10  & 11  & 12  & 13    & 14     \\ 
\hline
$D_i$& 7.91E-03 &	0	&2.81E-03	&3.40E-03	&0	&3.05E-03&	0&	3.32E-03&	5.90E-03&	0	&0	&0&	2.12E-03&	2.64E-03
  \\
$D^Q_i$ & 1.98E-03&	0&	7.04E-03&	8.51E-03&0&	7.64E-04
&0&	8.32E-03&	1.48E-03&	0&	0&	0&	5.32E-03&	6.63E-04

  \\
\hline
\end{tabular}
\vspace{2\baselineskip}
\caption{Susceptance $b_{ij}$}\label{table:susceptance}
\setlength{\tabcolsep}{1pt}
\begin{tabular}{ccccccccccccccc} 
\hline
Bus & 1         & 2         & 3         & 4         & 5         & 6         & 7         & 8         & 9         & 10        & 11        & 12        & 13        & 14         \\ 
\hline
1   & -9.98E+02 & 9.98E+02  & 0         & 0         & 0         & 0         & 0         & 0         & 0         & 0         & 0         & 0         & 0         & 0          \\
2   & 9.98E+02  & -2.60E+03 & 4.97E+02  & 1.11E+03  & 0         & 0         & 0         & 0         & 0         & 0         & 0         & 0         & 0         & 0          \\
3   & 0         & 4.97E+02  & -4.97E+02 & 0         & 0         & 0         & 0         & 0         & 0         & 0         & 0         & 0         & 0         & 0          \\
4   & 0         & 1.11E+03  & 0         & -4.35E+03 & 3.24E+03  & 0         & 0         & 0         & 0         & 0         & 0         & 0         & 0         & 0          \\
5   & 0         & 0         & 0         & 3.24E+03  & -4.79E+03 & 5.72E+02  & 9.77E+02  & 0         & 0         & 0         & 0         & 0         & 0         & 0          \\
6   & 0         & 0         & 0         & 0         & 5.72E+02  & -5.72E+02 & 0         & 0         & 0         & 0         & 0         & 0         & 0         & 0          \\
7   & 0         & 0         & 0         & 0         & 9.77E+02  & 0         & -4.00E+03 & 6.92E+02  & 9.26E+02  & 1.41E+03  & 0         & 0         & 0         & 0          \\
8   & 0         & 0         & 0         & 0         & 0         & 0         & 6.92E+02  & -6.92E+02 & 0         & 0         & 0         & 0         & 0         & 0          \\
9   & 0         & 0         & 0         & 0         & 0         & 0         & 9.26E+02  & 0         & -9.26E+02 & 0         & 0         & 0         & 0         & 0          \\
10  & 0         & 0         & 0         & 0         & 0         & 0         & 1.41E+03  & 0         & 0         & -2.27E+03 & 8.64E+02  & 0         & 0         & 0          \\
11  & 0         & 0         & 0         & 0         & 0         & 0         & 0         & 0         & 0         & 8.64E+02  & -3.85E+03 & 2.99E+03  & 0         & 0          \\
12  & 0         & 0         & 0         & 0         & 0         & 0         & 0         & 0         & 0         & 0         & 2.99E+03  & -7.10E+03 & 2.08E+03  & 2.04E+03   \\
13  & 0         & 0         & 0         & 0         & 0         & 0         & 0         & 0         & 0         & 0         & 0         & 2.08E+03  & -2.08E+03 & 0          \\
14  & 0         & 0         & 0         & 0         & 0         & 0         & 0         & 0         & 0         & 0         & 0         & 2.04E+03  & 0         & -2.04E+03  \\
\hline
\end{tabular}
\end{sidewaystable}

\begin{sidewaystable}
% \sidewaystablefn%
\centering
\caption{Resistance $r_{ij}$}\label{table:resistance}
\footnotesize
\setlength{\tabcolsep}{1pt}
\begin{tabular}{ccccccccccccccc} 
\hline
Bus & 1        & 2        & 3        & 4        & 5        & 6        & 7        & 8        & 9        & 10       & 11       & 12       & 13       & 14        \\ 
\hline
1   & 0        & 5.01E-04 & 0        & 0        & 0        & 0        & 0        & 0        & 0        & 0        & 0        & 0        & 0        & 0         \\
2   & 5.01E-04 & 0        & 1.01E-03 & 4.51E-04 & 0        & 0        & 0        & 0        & 0        & 0        & 0        & 0        & 0        & 0         \\
3   & 0        & 1.01E-03 & 0        & 0        & 0        & 0        & 0        & 0        & 0        & 0        & 0        & 0        & 0        & 0         \\
4   & 0        & 4.51E-04 & 0        & 0        & 1.54E-04 & 0        & 0        & 0        & 0        & 0        & 0        & 0        & 0        & 0         \\
5   & 0        & 0        & 0        & 1.54E-04 & 0        & 8.75E-04 & 5.12E-04 & 0        & 0        & 0        & 0        & 0        & 0        & 0         \\
6   & 0        & 0        & 0        & 0        & 8.75E-04 & 0        & 0        & 0        & 0        & 0        & 0        & 0        & 0        & 0         \\
7   & 0        & 0        & 0        & 0        & 5.12E-04 & 0        & 0        & 7.23E-04 & 5.40E-04 & 3.56E-04 & 0        & 0        & 0        & 0         \\
8   & 0        & 0        & 0        & 0        & 0        & 0        & 7.23E-04 & 0        & 0        & 0        & 0        & 0        & 0        & 0         \\
9   & 0        & 0        & 0        & 0        & 0        & 0        & 5.40E-04 & 0        & 0        & 0        & 0        & 0        & 0        & 0         \\
10  & 0        & 0        & 0        & 0        & 0        & 0        & 3.56E-04 & 0        & 0        & 0        & 5.79E-04 & 0        & 0        & 0         \\
11  & 0        & 0        & 0        & 0        & 0        & 0        & 0        & 0        & 0        & 5.79E-04 & 0        & 1.67E-04 & 0        & 0         \\
12  & 0        & 0        & 0        & 0        & 0        & 0        & 0        & 0        & 0        & 0        & 1.67E-04 & 0        & 2.40E-04 & 2.46E-04  \\
13  & 0        & 0        & 0        & 0        & 0        & 0        & 0        & 0        & 0        & 0        & 0        & 2.40E-04 & 0        & 0         \\
14  & 0        & 0        & 0        & 0        & 0        & 0        & 0        & 0        & 0        & 0        & 0        & 2.46E-04 & 0        & 0         \\
\hline
\end{tabular}

\vspace{2\baselineskip}
\caption{ Reactance $\mathcal{X}_{ij}$}\label{table:reactance}
\setlength{\tabcolsep}{1pt}
\begin{tabular}{ccccccccccccccc} 
\hline
Bus & 1        & 2        & 3        & 4        & 5        & 6        & 7        & 8        & 9        & 10       & 11       & 12       & 13       & 14        \\ 
\hline
1   & 0        & 5.01E-04 & 0        & 0        & 0        & 0        & 0        & 0        & 0        & 0        & 0        & 0        & 0        & 0         \\
2   & 5.01E-04 & 0        & 1.01E-03 & 4.51E-04 & 0        & 0        & 0        & 0        & 0        & 0        & 0        & 0        & 0        & 0         \\
3   & 0        & 1.01E-03 & 0        & 0        & 0        & 0        & 0        & 0        & 0        & 0        & 0        & 0        & 0        & 0         \\
4   & 0        & 4.51E-04 & 0        & 0        & 1.54E-04 & 0        & 0        & 0        & 0        & 0        & 0        & 0        & 0        & 0         \\
5   & 0        & 0        & 0        & 1.54E-04 & 0        & 8.75E-04 & 5.12E-04 & 0        & 0        & 0        & 0        & 0        & 0        & 0         \\
6   & 0        & 0        & 0        & 0        & 8.75E-04 & 0        & 0        & 0        & 0        & 0        & 0        & 0        & 0        & 0         \\
7   & 0        & 0        & 0        & 0        & 5.12E-04 & 0        & 0        & 7.23E-04 & 5.40E-04 & 3.56E-04 & 0        & 0        & 0        & 0         \\
8   & 0        & 0        & 0        & 0        & 0        & 0        & 7.23E-04 & 0        & 0        & 0        & 0        & 0        & 0        & 0         \\
9   & 0        & 0        & 0        & 0        & 0        & 0        & 5.40E-04 & 0        & 0        & 0        & 0        & 0        & 0        & 0         \\
10  & 0        & 0        & 0        & 0        & 0        & 0        & 3.56E-04 & 0        & 0        & 0        & 5.79E-04 & 0        & 0        & 0         \\
11  & 0        & 0        & 0        & 0        & 0        & 0        & 0        & 0        & 0        & 5.79E-04 & 0        & 1.67E-04 & 0        & 0         \\
12  & 0        & 0        & 0        & 0        & 0        & 0        & 0        & 0        & 0        & 0        & 1.67E-04 & 0        & 2.40E-04 & 2.46E-04  \\
13  & 0        & 0        & 0        & 0        & 0        & 0        & 0        & 0        & 0        & 0        & 0        & 2.40E-04 & 0        & 0         \\
14  & 0        & 0        & 0        & 0        & 0        & 0        & 0        & 0        & 0        & 0        & 0        & 2.46E-04 & 0        & 0         \\
\hline
\end{tabular}
\end{sidewaystable}

\bibliographystyle{plain}
\bibliography{refs}

\end{document}